\DeclareMathOperator{\acc}{acc}
\DeclareMathOperator{\dom}{dom}
\DeclareMathOperator{\otp}{otp}
\DeclareMathOperator{\len}{len}
\DeclareMathOperator{\crit}{crit}
\DeclareMathOperator{\cf}{cf}
\DeclareMathOperator{\stem}{stem}
\DeclareMathOperator{\Col}{Col}
\DeclareMathOperator{\Ult}{Ult}
\DeclareMathOperator{\Lev}{Lev}
\newcommand{\ZFC}{{\rm ZFC}\xspace}
\newcommand{\SCH}{{\rm SCH}\xspace}
\newcommand{\bM}{\mathbb{M}}
\newcommand{\bP}{\mathbb{P}}
\newcommand{\bS}{\mathbb{S}}
\newcommand{\bT}{\mathbb{T}}
\newcommand{\tup}[1]{\left\langle#1\right\rangle}
\newtheorem{theorem}{Theorem}
\newaliascnt{corollary}{theorem}
\newaliascnt{lemma}{theorem}
\newtheorem{lemma}[lemma]{Lemma}
\newaliascnt{claim}{theorem}
\theoremstyle{definition}
\newaliascnt{definition}{theorem}
\newtheorem{definition}[definition]{Definition}
\newtheorem{question}{Question}
\begin{document}
\author{Yair Hayut}
\email[Yair Hayut]{yair.hayut@mail.huji.ac.il}
\address{Einstein Institute of Mathematics \\
Edmond J. Safra Campus \\
The Hebrew University of Jerusalem \\
Givat Ram. Jerusalem, 9190401, Israel}

\author{Menachem Magidor}
\email[Menachem Magidor]{mensara@savion.huji.ac.il}
\address{Einstein Institute of Mathematics \\
Edmond J. Safra Campus \\
The Hebrew University of Jerusalem \\
Givat Ram. Jerusalem, 9190401, Israel}
\title{Destructibility of the tree property at \texorpdfstring{$\aleph_{\omega+1}$}{aleph omega plus 1}}
\begin{abstract}
We construct a model in which the tree property holds in $\aleph_{\omega + 1}$ and it is destructible under $\Col(\omega, \omega_1)$. On the other hand we discuss some cases in which the tree property is indestructible under small or closed forcings.
\end{abstract}
\maketitle
\section{Introduction}
A partial order $\langle T, \leq_T\rangle$ is called a \emph{tree}, if it has a minimal element and for every $t\in T$, the set $\{s\in T\mid s\leq_T t\}$ is well ordered by $\leq_T$. The order type of the chain of elements that lie below $t$ in the tree order is called the \emph{level} of $t$ and denoted by $\Lev_T (t)$. For a cardinal $\kappa$, $T$ is called a $\kappa$-tree if $\sup_{t\in T} (\Lev_T(t) + 1) = \kappa$ and the cardinality of each level of $T$ is strictly below $\kappa$.

By a theorem of K\H{o}nig, every $\omega$-tree has a cofinal branch (namely, a cofinal chain). On the other hand, a theorem of Aronszajn states that there is an $\omega_1$-tree that has no cofinal branches. Such a tree is called \emph{Aronszajn tree}. For any larger successor cardinal, $\kappa > \omega_1$, it is independent of \ZFC whether there is $\kappa$-tree with no cofinal branches. This question is related to other combinatorial topics and in order to get the consistency of the non-existence of $\kappa$-Aronszajn tree, one must assume the consistency of some large cardinals. If every $\kappa$ tree has a cofinal branch, we say that $\kappa$ has the \emph{tree property}.  

By a theorem of Silver, if uncountable cardinal $\kappa$ has the tree property then $\kappa$ is weakly compact in $L$. On the other end, Mitchell proved that if $\kappa$ is weakly compact and $\mu < \kappa$ is regular then there is a generic extension in which $\kappa = \mu^{++}$ and the tree property holds at $\kappa$, thus showing that the tree property at the double successor of a regular cardinal is equiconsistent with the existence of a weakly compact cardinal. Where $\kappa$ is the successor of a singular cardinal, the situation is more complicated. In \cite{MagidorShelah1996}, Magidor and Shelah showed that it is consistent, relative to some large cardinals, that the tree property holds at $\aleph_{\omega+1}$. The large cardinals assumption was later reduced by Sinapova and Neeman to the existence of an $\omega$-sequence of supercompact cardinals (see, e.g.\ \cite{Neeman2014} for the Prikry-free version). In both constructions, $\aleph_1$ plays a special role. It reflects, in some sense, the properties of $\aleph_{\omega+1}$.

In section \ref{sec:destructible} we will show that it is consistent to have a model in which the tree property holds at $\aleph_{\omega+1}$, but after collapsing $\aleph_1$, it fails. This extends a work by Cummings, Foreman and the second author \cite[Theorem 14]{CummingsForemanMagidor2001}. In this paper they show that it is possible that a weak square is added by a small forcing. Our arguments are very similar to the arguments there. In \cite{Rinot2009}, Rinot shows that it is consistent that there is no special Aronszajn tree on $\aleph_{\omega_1 + 1}$ and a $\sigma$-closed $\aleph_2$-Knaster forcing of cardinality $\aleph_3$ introduces one. We note that we do not know how to apply a similar argument for this case.

In section \ref{sec:indestructible} we discuss three cases in which the tree property at a successor of a singular cardinal is somewhat indestructible. In \ref{subsec: indestructible omega^2} we will show that it is consistent that the tree property holds at $\aleph_{\omega^2+1}$ and it is indestructible under any forcing of cardinality ${<}\aleph_{\omega^2}$. In \ref{subsec: indestructible omega closed} we will show that the tree property at $\aleph_{\omega+1}$ can be made indestructible under small $\sigma$-closed forcings.
\section{Preliminaries}
The following notation, due to Magidor and Shelah \cite{MagidorShelah1996}, plays an important role in the investigation of the tree property at successors of singular cardinals. For more information about narrow systems and their connections to squares we refer to \cite{LambieHanson2017}.
\begin{definition}
Let $\lambda$ be a regular cardinal. A \emph{system} is a triplet $\mathcal{S} = \langle I, \kappa, \mathcal{R}\rangle$ such that:
\begin{enumerate}
\item $I\subseteq \lambda$ unbounded. $\kappa < \lambda$.
\item $\mathcal{R}$ is a collection of partial order relations on $I\times\kappa$.
\item Each $R\in\mathcal{R}$ is a tree like partial order. $R$ respects the lexicographic order on $I\times \kappa$. Namely, $\langle \alpha, \zeta\rangle R \langle \beta, \xi\rangle$ implies $\alpha \leq \beta$ and if $\alpha = \beta$ then $\zeta = \xi$. Moreover, if $\langle \beta, \xi\rangle, \langle \gamma, \rho\rangle R \langle \alpha, \zeta\rangle$ and $\beta \leq \gamma$ then $\langle \beta, \xi\rangle R \langle \gamma, \rho\rangle$.
\item For every $\alpha < \beta$ in $I$ there are $\zeta, \xi < \kappa$ and $R\in\mathcal{R}$ such that $\langle \alpha, \zeta\rangle R \langle \beta, \xi\rangle$.
\end{enumerate}
A \emph{branch} through $\mathcal{S}$ is a set of elements in $I\times \kappa$ which is a chain relative to some $R\in\mathcal{R}$. We say that a branch $b$ meets the $\alpha$-th level of $\mathcal{S}$ if $b\cap \{\alpha\}\times \kappa \neq \emptyset$. A branch is \emph{cofinal} if it meets cofinally many levels.

A system $\mathcal{S}$ is \emph{narrow} if $\max(\kappa^+, |\mathcal{R}|^+) < \lambda$.
\end{definition}
\begin{definition}
Let $\lambda$ be a regular cardinal. We say that the \emph{narrow system property} holds at $\lambda$ if every narrow system of height $\lambda$ has a cofinal branch.
\end{definition}
Unlike the tree property, the narrow system property is indestructible by any small forcing. 
Let $\mathbb{P}$ be a forcing notion with $|\mathbb{P}|^+ < \lambda$ and let $\dot{\mathcal{S}}$ be a name for a narrow system. Let $\dot{\mathcal{R}}$ be the collection of names of relations in $\mathcal{S}$ and let $I$ be the set of all ordinals that can be levels of the $\mathbb{P}$. Let us define the narrow system $\hat{\mathcal{S}}$ in the natural way: the relations of $\hat{\mathcal{S}}$ are indexed by $\mathbb{P}\times\dot{\mathcal{R}}$, and let $\langle \alpha, \beta \rangle (p, R) \langle \gamma, \delta\rangle$ iff $p\Vdash \langle \alpha,\beta\rangle R \langle \gamma ,\delta\rangle$ for $R\in\dot{\mathcal{R}}$. A branch in the system $\hat{\mathcal{S}}$ corresponds to a condition $p\in\mathbb{P}$ and a set of element in $\mathcal{S}$ which are forced to be a branch in the generic extension by $p$.

\section{Destructible tree property}\label{sec:destructible}
\begin{theorem}\label{thm: destructible tree property} Let $\kappa = \kappa_0 < \kappa_1 < \cdots$ be an $\omega$-sequence of supercompact cardinals. Then there is a forcing extension in which the tree property holds at $\aleph_{\omega + 1}$ and the forcing $\Col(\omega, \omega_1)$ adds a special $\aleph_{\omega+1}$-Aronszajn tree.
\end{theorem}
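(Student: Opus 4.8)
The plan is to start from the Magidor–Shelah / Neeman–Sinapova setup that produces the tree property at $\aleph_{\omega+1}$ from an $\omega$-sequence of supercompact cardinals, but to arrange the construction so that a weak-square-like object is *added* by the small collapse $\Col(\omega,\omega_1)$, and then invoke the standard implication that a (weak) square sequence yields a special Aronszajn tree. The excerpt itself points the way: the authors say their argument is "very similar" to \cite[Theorem 14]{CummingsForemanMagidor2001}, where a weak square is added by a small forcing. So the strategy is to prove the tree property at $\aleph_{\omega+1}$ holds in the ground extension $V[G]$, while simultaneously preparing a $\square$-like sequence that only becomes coherent (and hence usable to build a special Aronszajn tree) once $\omega_1$ is collapsed to $\omega$.

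Concretely, first I would set up the iteration: an Easton-style or Prikry-type preparation that makes the $\kappa_n$ into the $\aleph_n$'s (so $\sup_n\kappa_n=\aleph_\omega$ and $\aleph_\omega^+=\aleph_{\omega+1}$ is the old supremum's successor), following the Prikry-free Neeman presentation cited as \cite{Neeman2014}. The point of using that framework is that it gives the tree property at $\aleph_{\omega+1}$ by a reflection/branch-catching argument using the supercompactness embeddings, and crucially it is flexible enough to let us thread in auxiliary generic objects. Here $\aleph_1$ is designed to "reflect" properties of $\aleph_{\omega+1}$, exactly as the introduction emphasizes. The second step is to add, by the same forcing or a factor of it, a sequence $\langle C_\alpha : \alpha<\aleph_{\omega+1}\rangle$ of guessing clubs that is *not* coherent in $V[G]$ — so that no special Aronszajn tree can be read off it there — but whose coherence is witnessed once we have a surjection $\omega\to\omega_1$, i.e.\ once $\Col(\omega,\omega_1)$ has acted. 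The mechanism is that collapsing $\omega_1$ changes cofinalities at the bottom, allowing the fragments of the square sequence to be knit together into a genuine weak square on $\aleph_{\omega+1}$ in $V[G][H]$, where $H$ is $\Col(\omega,\omega_1)$-generic.

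The third step is the clean, known half: in $V[G][H]$ we have a weak square (or $\square_{\aleph_\omega,\lambda}$ for suitable $\lambda$) sequence on $\aleph_{\omega+1}$, and from any such sequence one builds a special $\aleph_{\omega+1}$-Aronszajn tree by the standard construction — nodes are the functions tracking the square-sequence, and the specializing function is read off the coherence. This gives the conclusion that $\Col(\omega,\omega_1)$ adds a special Aronszajn tree. I would verify along the way that $\Col(\omega,\omega_1)$ is small enough (cardinality $\aleph_1<\aleph_\omega$) that it preserves $\aleph_{\omega+1}$ and does not accidentally collapse the relevant cardinals, and that it genuinely *adds* the tree rather than one existing already in $V[G]$; the latter is guaranteed because the tree property holds at $\aleph_{\omega+1}$ in $V[G]$, so no special Aronszajn tree is present there.

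**The hard part will be** the middle step: arranging the generic square-fragment so that it is provably incoherent (equivalently, useless for building an Aronszajn tree) in $V[G]$ yet becomes coherent in $V[G][H]$, all while not disturbing the delicate branch-catching argument that secures the tree property at $\aleph_{\omega+1}$ in $V[G]$. Keeping these two requirements compatible is the crux — the forcing must be rich enough to carry the latent square but tame enough (sufficiently distributive / homogeneous above the relevant points, and captured by the supercompactness embeddings) that the reflection argument for the tree property still goes through unchanged. I expect this to be where the \cite{CummingsForemanMagidor2001} techniques are adapted most carefully, tracking precisely how collapsing $\aleph_1$ activates the coherence of the guessing sequence.
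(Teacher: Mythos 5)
Your high-level architecture agrees with the paper's (plant a latent square alongside the tree property, let $\Col(\omega,\omega_1)$ turn it into a full weak square, then read off a special Aronszajn tree), but the mechanism you propose for the middle step cannot work, and the step you defer as ``the hard part'' is precisely where the paper's actual content lies. Coherence of a fixed sequence $\langle \mathcal{C}_\alpha \mid \alpha<\aleph_{\omega+1}\rangle$ --- the requirement that $D\cap\beta\in\mathcal{C}_\beta$ whenever $D\in\mathcal{C}_\alpha$ and $\beta\in\acc D$ --- is absolute between models with the same ordinals, so no forcing can make an incoherent sequence ``become coherent''; your phrase ``provably incoherent (equivalently, useless for building an Aronszajn tree)'' also conflates two different things. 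What the paper forces instead is a fully coherent but \emph{partial} square of width $<\aleph_\omega$, whose domain is the set of ordinals of cofinality $\geq\kappa_0$ (which, after the collapses, means cofinality $>\omega_1$). Such a square is useless for building an Aronszajn tree because its domain omits the small-cofinality points, not because it lacks coherence. After $\Col(\omega,\omega_1)$, every point of $\aleph_{\omega+1}$ outside the domain has cofinality $\omega$, and one completes the square there by \emph{adding new clubs}: cofinal $\omega$-sequences of successor ordinals, which have empty sets of accumulation points and hence carry no coherence obligation. This yields $\square_{\aleph_\omega,<\aleph_\omega}$ in the extension, hence a special tree; nothing latent is ``activated,'' one simply fills in the missing coordinates generically-for-free once their cofinality has dropped to $\omega$.

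The second gap is that you give no argument that the tree property at $\aleph_{\omega+1}$ coexists with the partial square in $V[G]$; saying the forcing must be ``rich enough but tame enough'' is a desideratum, not a proof. The paper's solution has two concrete ingredients you would need (or need a substitute for). First, the square forcing $\mathbb{S}$ is forced over $V$, is $\kappa_0$-directed closed, and the supercompacts are Laver-indestructible, so all $\kappa_n$ remain supercompact in $V^{\mathbb{S}\times\mathbb{M}}$. Second --- and this is the crux, taken from the Cummings--Foreman--Magidor technique --- the threading forcing $\mathbb{T}_\eta$, which adds a club through the partial square, satisfies that $\mathbb{S}\ast\mathbb{T}_\eta^\rho$ has an $\eta$-closed dense subset; forcing with many mutually generic threads therefore lets one lift supercompactness embeddings over $V^{\mathbb{S}\times\mathbb{M}}$ and prove the narrow system property at $\lambda$ there (Lemma~\ref{lem: nsp after partial square}). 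The tree property in the final model is then \emph{not} obtained by rerunning Neeman's iteration argument untouched, but by a separate reflection argument (Theorem~\ref{thm: choice of rho}): one assumes every candidate collapse $\Col(\omega,\rho^{+\omega})\times\Col(\rho^{+\omega+1},<\kappa_0)$ forces an Aronszajn tree, extracts from a supercompactness embedding a narrow system over $V^{\mathbb{S}\times\mathbb{M}}$ indexed by the pairs $(\rho,p)$, and uses the narrow system property to find a branch, contradicting Aronszajn-ness for some $\rho$. Without the threading absorption there is no reason the branch-catching machinery survives the square forcing at all, and that is exactly the obstruction your proposal leaves unresolved.
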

We will prove something slightly stronger. We will define a forcing poset that forces that in the generic extension there is a partial weak square on $\aleph_{\omega + 1}$ whose domain contains all ordinals with cofinality above $\omega_1$, while the tree property holds at $\aleph_{\omega+1}$. If we further extend the universe and collapse $\omega_1$ to be countable, then we can complete all the missing places in this square sequence by just adding $\omega$ sequences. By a theorem of Shelah and Ben-David \cite[Theorem 3]{ShelahBenDavid1986}, without violating the continuum hypothesis at $\aleph_\omega$, we cannot hope to have this kind of partial square with only one club at each ordinal, while having the tree property. 

Let $\mu = \sup \kappa_n$ and let $\lambda = \mu^+$.

We begin with some definitions:
\begin{definition} A partial square on a set $S\subseteq \lambda$ with width $<\eta$ is a sequence $\mathcal{C} = \langle \mathcal{C}_\alpha \mid \alpha < \lambda\rangle$ such that:
\begin{enumerate}
\item For every $\alpha < \lambda$, $\mathcal{C}_\alpha$ is a set of cardinality $<\eta$. If $\alpha \in S$ then $\mathcal{C}_\alpha \neq \emptyset$.
\item Every $D\in\mathcal{C}_\alpha$ is a closed and unbounded subset of $\alpha$ and $\otp D < \alpha$.
\item If $\beta \in \acc D$, $D\in \mathcal{C}_\alpha$ then $D\cap \beta \in \mathcal{C}_\beta$.
\end{enumerate}
\end{definition}
When $\lambda = \mu^+$, we may assume that $\otp D \leq \mu$ for every $D\in\mathcal{C}_\alpha$.

Since successor ordinals are never accumulation points of a club, the values of the square sequence at successor points are irrelevant. We will assume that $\mathcal{C}_{\alpha+ 1} = \{\alpha\}$ for every $\alpha$, for consistency.

We want to force a partial square for the set $S^\lambda_{\geq\kappa}$ with width $<\mu$.
\begin{definition}
Let $\mathbb{S}$ be the following forcing notion. A condition $s\in \mathbb{S}$ is a sequence $s = \langle c_i \mid i \leq \gamma\rangle$ for some ordinal $\gamma < \mu^+$ such that all three requirements for the partial square sequence hold for every $\alpha \leq \gamma$. Namely,
\begin{enumerate}
\item $\forall \alpha \leq \gamma$, $c_\alpha$ is a set of less than $\mu$ sets. If $\cf \alpha \geq \kappa$, then $c_\alpha \neq \emptyset$.
\item For every $D\in c_\alpha$, $\otp D \leq \mu$ and $D$ is a closed and unbounded subset of $\alpha$.
\item If $\beta\in\acc D$, $D\in c_\alpha$ then $D\cap \beta\in c_\beta$.
\end{enumerate}
We order $\mathbb{S}$ by end extension.
\end{definition}
We will think of the conditions $s\in\mathbb{S}$ as functions, so for $s = \langle c_i \mid i \leq \gamma\rangle$ we will write $\dom s = \gamma + 1$ and $s(i) = c_i$ for $i\in\dom s$. 
\begin{lemma}
$\mathbb{S}$ is $\kappa$-directed closed.
\end{lemma}
Given a partial square $\mathcal{C}$, we will define a threading forcing, $\mathbb{T}_\eta$. This forcing will add a club at $\lambda$ with order type $\eta$ such that all its initial segments are from $\mathcal{C}$.

\begin{definition}
Let $\mathbb{T}_\eta = \{D \mid \exists \alpha,\,D\in\mathcal{C}_\alpha,\,1 < \otp D < \eta\}$, ordered by end extension.
\end{definition}
The following lemma is standard:
\begin{lemma} Let $\mathbb{S}, \mathbb{T}_\eta$ be as above. Then:
\begin{enumerate}
\item $\mathbb{S}$ is $\lambda$-distributive.
\item Let $\mathcal{C}$ be the generic partial square added by $\mathbb{S}$, and let $\eta$ be a regular cardinal. $\mathbb{S}\ast\mathbb{T}_{\eta}$ is equivalent to an $\eta$-directed closed forcing. Moreover, for every $\rho < \mu$, $\mathbb{S}\ast\mathbb{T}_{\eta}^\rho$ (where we use full support power in $V^{\mathbb{S}}$) contains an $\eta$-directed closed dense subset.
\end{enumerate}
\end{lemma}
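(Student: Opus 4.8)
The plan is to prove $(1)$ directly and then bootstrap it into $(2)$. For $(1)$ I would verify that the intersection of any family $\langle D_\xi \mid \xi < \delta\rangle$ of dense open sets with $\delta < \lambda$ is dense below an arbitrary $p \in \mathbb{S}$; note $\delta \le \mu$ since $\lambda = \mu^+$ is regular. I would build a $\le$-decreasing sequence $\langle s_\xi \mid \xi \le \delta\rangle$ with $s_0 = p$, together with a continuous, strictly increasing sequence of heights $\gamma_\xi := \max\dom s_\xi$, arranging $s_{\xi+1} \in D_\xi$ at successor stages. Stepping into $D_\xi$ is possible by density, and one can always further end-extend to push the height up, since below any condition there are conditions of arbitrarily large domain $<\lambda$. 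The entire content of the argument is the limit step.

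At a limit $\zeta \le \delta$ set $\gamma_\zeta = \sup_{\xi<\zeta}\gamma_\xi$ and $s = \bigcup_{\xi<\zeta}s_\xi$, a function on $\gamma_\zeta$ obeying clauses (1)--(3) below $\gamma_\zeta$; the task is to choose $s(\gamma_\zeta)$ so that the result is a condition. The device is to \emph{thread the heights}: put $C_\zeta = \{\gamma_\xi \mid \xi<\zeta\}$, a club in $\gamma_\zeta$ of order type $\zeta \le \mu$, and declare $s_\zeta = s^\frown\langle\{C_\zeta\}\rangle$. Clause (1) holds because $s_\zeta(\gamma_\zeta) \ne \emptyset$ regardless of $\cf\gamma_\zeta$, and clause (3) holds because each $\beta \in \acc C_\zeta$ is some $\gamma_\xi$ with $\xi$ limit, whence $C_\zeta \cap \beta = C_\xi \in s_\xi(\beta) \subseteq s_\zeta(\beta)$ by induction. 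Thus $s_\zeta$ is a lower bound, and the terminal $s_\delta \le p$ lies in every $D_\xi$ by openness; this gives $(1)$. The point is that, unlike for genuine $\eta$-closure, distributivity lets us \emph{choose} the sequence, so we can lay down a coherent thread as we descend.

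For $(2)$, first observe that by $(1)$ the forcing $\mathbb{S}$ adds no sequence of ground-model ordinals of length $<\lambda$, so every $\mathbb{T}_\eta$-condition—a club $D$ with $\otp D \le \mu$, hence of length $<\lambda$—already lies in $V$. Hence I would work with the set $E \subseteq \mathbb{S}\ast\mathbb{T}_\eta$ of pairs $(s,\check D)$ such that $D$ is an actual club, $1 < \otp D < \eta$, $\sup D = \max\dom s$, and $D \in s(\sup D)$; that is, the thread is an initial segment already recorded in the approximation $s$. For density, given $(s,\dot D)$ I would extend $s$ to decide $\dot D = \check{D_0}$ and then end-extend $s$ so that $\sup D_0$ is its top point and $D_0 \in s(\sup D_0)$—legitimate since $s$ already forces $D_0 \in \mathcal{C}$. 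On $E$ the order is simultaneous end-extension of both coordinates.

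Finally I would check that $E$ is $\eta$-closed: for a decreasing $\langle (s_i,\check{D_i}) \mid i<\theta\rangle$ with $\theta<\eta$, set $D^* = \bigcup_i D_i$ and $s^* = (\bigcup_i s_i)^\frown\langle\{D^*\}\rangle$. Regularity of $\eta$ gives $\otp D^* = \sup_i\otp D_i < \eta$, and $\otp D^* \le \mu$ as a supremum of order types $\le\mu$, so $D^* \in \mathbb{T}_\eta$; the coherence needed to place $D^*$ at the top of $s^*$ is exactly the limit computation from $(1)$, now \emph{supplied for free} by the generic thread instead of constructed by hand. Thus $(s^*,\check{D^*}) \in E$ is a lower bound, $E$ is $\eta$-closed, and $\mathbb{S}\ast\mathbb{T}_\eta$ is equivalent to an $\eta$-closed forcing. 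The \emph{moreover} clause is identical, carrying $\rho<\mu$ decided threads $\langle D^j \mid j<\rho\rangle$ and setting $s^*(\gamma^*) = \{D^{j,*} \mid j<\rho\}$; full support is harmless because $\rho<\mu$ keeps the width below $\mu$. The main obstacle in both parts is the lower-bound step—producing a coherent club of order type $\le\mu$ at the new top point—which is resolved by self-threading in $(1)$ and by the generically added thread(s) in $(2)$.
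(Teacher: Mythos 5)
Your proof of (2) is essentially the paper's: you isolate the same dense set of ``synchronized'' pairs $(s,\check{D})$ with $\sup D=\max\dom s$ and $D\in s(\max\dom s)$, and verify $\eta$-closure by taking unions of the threads and appending the union at the new top point, exactly as the paper does (including the same treatment of the $\rho$-fold full-support power, where the width stays $\rho<\mu$). Where you genuinely diverge is in (1): the paper does not prove distributivity directly, but deduces it from (2), reading the dense $\eta$-closed subset of $\mathbb{S}\ast\mathbb{T}_\eta$ as witnessing $\eta$-strategic closure of $\mathbb{S}$ for every regular $\eta<\lambda$. You instead give a direct construction, descending through the $\delta<\lambda$ dense open sets and solving the limit-stage problem by self-threading the heights, $C_\zeta=\{\gamma_\xi\mid\xi<\zeta\}$. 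This is correct: continuity of the heights makes $\acc C_\zeta = \{\gamma_\xi\mid \xi<\zeta \text{ limit}\}$, coherence holds because each $C_\xi$ was recorded in $s_\xi(\gamma_\xi)$ and end-extension preserves it, and $\otp C_\zeta\le\mu$. It buys you a self-contained (1), which you may then legitimately quote in (2) to see that every $\mathbb{T}_\eta$-condition lies in $V$ (no circularity, given your order of proof); the paper gets the same fact more cheaply by noting that any member of $\mathcal{C}_\alpha$ is placed there by a single condition, so it never needs distributivity for this.

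One step of your density argument is misstated and would fail as written: after passing to $s'$ deciding $\dot{D}=\check{D_0}$, you propose to ``end-extend $s$ so that $\sup D_0$ is its top point.'' End-extension can only increase $\max\dom s'$, and the deciding condition will in general satisfy $\max\dom s'>\sup D_0$ (indeed $s'$ can force $\check{D_0}\in\mathcal{C}_\alpha$ only when $\alpha<\dom s'$, since otherwise some extension of $s'$ places a different family at level $\alpha$), so you cannot make $\sup D_0$ the top. The correct move --- the one the paper makes --- is to extend the \emph{club} rather than try to shrink the condition: pass to $s''$ with $\dom s''=\dom s'+\omega+1$ and put at its top an end-extension $D'$ of $D_0$, say $D'=D_0\cup\{\sup D_0\}\cup\{\max\dom s'+n\mid 1\le n<\omega\}$. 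The added points are isolated, the unique new accumulation point $\sup D_0$ coheres because $D_0\in s'(\sup D_0)$, and $\otp D'=\otp D_0+\omega+1<\eta$ for uncountable regular $\eta$ (the case $\eta=\omega$ being vacuous). Then $(s'',\check{D'})\le(s',\check{D_0})$ lies in your set $E$. With this local repair your argument is complete.
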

\begin{proof}
Let us show that $\mathbb{S}$ is $\lambda$-distributive. We will show that it is $\eta$-strategically closed for every regular $\eta < \lambda$. We will do this by showing the second part of the lemma -- that $\mathbb{S}\ast\mathbb{T}_\eta$ contains a $\eta$-closed dense set.  

Let us observe first that the set of conditions $\langle s, \check{t}\rangle \in \mathbb{S}\ast\mathbb{T}_\eta$, $\dom(s) = \gamma + 1$, $t\in s(\gamma)$ is dense. For every condition $\langle s, \dot{t}\rangle$, \[s\Vdash \text{``}\dot{t}\text{ is a member of some set in the square sequence''},\] and therefore $\dot{t}$ is forced to be a member of the ground model. 

Thus, there is an extension of $s$, $s^\prime$, which decides the value of $\dot{t}$ to be equal to an element in $V$, that we will denote by $t$. The closed set $t$ might have no extension in $s^\prime(\max \dom s^\prime)$ but we can extend $s^\prime$ to $s^{\prime\prime}$ where $\dom s^{\prime\prime} = \dom s^\prime + \omega + 1$, and $t$ has an extension in the top element of $s^{\prime\prime}$. Let call this extension $t^\prime$. Thus we have a condition $\langle s^{\prime\prime}, t^{\prime}\rangle \leq \langle s, t\rangle$ and $\langle s^{\prime\prime}, t^{\prime}\rangle$ has the desired form.

The set \[D = \{\langle s, \check{t}\rangle \in \mathbb{S}\ast\mathbb{T}_\eta \mid \max t = \max \dom s\}\] is $\eta$-directed closed. Let $\rho < \eta$ and let $\{\langle s_i, \check{t_i}\rangle \mid i < \rho\} \subset D$ be a directed set. Let us assume that $\sup \dom s_i$ is a limit ordinal (otherwise, the sequence is fixed on a tail). The condition $\langle s_\star, t_\star\rangle$, where $t_\star = \bigcup t_i$ and $s_\star = (\bigcup s_i)^\smallfrown \langle \{t_\star\}\rangle$ is a condition in $D$, stronger than $s_i$ for all $i$.

The claim that $\mathbb{S}\ast\mathbb{T}_\eta^{\rho}$ contains a $\eta$-closed dense subset (for all $\rho < \mu$), is proved by the same method. For this case, we consider 
\[D = \{\langle s, \langle t_\alpha \mid \alpha < \rho\rangle \rangle \mid \forall \alpha < \rho,\, \max t_\alpha = \max \dom s\}.\]
By the same argument, using the fact that the bound on the cardinality of the set $s(\max \dom s)$, for $s\in\mathbb{S}$, is greater than $\rho$, we conclude that $D$ is dense and $\eta$-directed closed in $\mathbb{S}\ast \mathbb{T}_\eta^\rho$.  
\end{proof}

Let us move now toward the proof of \ref{thm: destructible tree property}. Let $\kappa_0 < \kappa_1 < \cdots < \kappa_n < \cdots$ be supercompact cardinals. By using Laver's preparation, we may assume that they are Laver-indestructible, i.e.\ that for every $n < \omega$ and every $\kappa_n$-directed closed forcing $\mathbb{P}$, $\Vdash_{\mathbb{P}} \check{\kappa}_n$ is supercompact.
Let $\mathbb{M} = \prod_{i<\omega} \Col (\kappa_i, <\kappa_{i+1})$ a full support product of Levy collapses.
\begin{lemma}\label{lem: nsp after partial square}
After forcing with $\mathbb{S}\times\mathbb{M}$, the narrow system property holds at $\lambda$.
\end{lemma}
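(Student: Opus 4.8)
The plan is to run the Magidor--Shelah supercompactness argument, using the threading forcing to import the closure that lets the relevant embedding be lifted.

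\emph{The branch mechanism.} Let $\mathcal S=\langle I,\theta,\mathcal R\rangle$ be a narrow system of height $\lambda$ in $V[\bS\times\bM]$; narrowness gives $\theta,|\mathcal R|<\mu$, so I fix $n$ with $\theta,|\mathcal R|<\kappa_{n-1}$. The heart of the matter is that \emph{any} elementary embedding $j$ of an outer model of $V[\bS\times\bM]$ with $\crit j=\kappa_n$, $j(\kappa_n)>\lambda$, and target closed under $\lambda$-sequences produces a cofinal branch. Since $\theta,|\mathcal R|<\crit j$, the map $j$ fixes $\theta$ and sends $\mathcal R$ bijectively onto $j(\mathcal R)=\{j(R):R\in\mathcal R\}$. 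Set $\delta=\sup j[\lambda]<j(\lambda)$ and fix a node $x$ on some level $\geq\delta$ of $j(\mathcal S)$. For each relation $j(R)$ and each of the $\leq\theta$ nodes $y$ on the level of $x$, the $j(R)$-predecessors of $y$ form a chain meeting each level at most once; by the connectedness axiom every level $j(\beta)$ with $\beta\in I$ is met by one of these at most $\theta\cdot|\mathcal R|<\kappa_n$ chains. As $\lambda$ is regular and $\kappa_n<\lambda$, some single chain $b$, through one relation $j(R_0)$, meets $\{j(\beta):\beta\in I\}$ cofinally. Then $c=\{\langle\beta,\rho\rangle\in I\times\theta:\langle j(\beta),\rho\rangle\in b\}$ is, by applying elementarity to each pair of its elements, an $R_0$-chain meeting cofinally many levels, i.e.\ a cofinal branch; and $c$ lies in the target, hence in the outer model.

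\emph{Producing the embedding.} Since $\bM$ collapses $\kappa_n$, the latter is no longer supercompact in $V[\bS\times\bM]$, so I would build $j$ in a suitable inner forcing extension of $V$ and lift. This is where the threading $\bT_\lambda$ enters: by the structural lemma $\bS\ast\bT_\lambda$ has a $\lambda$-closed dense subset—in fact a $\kappa_n$-directed closed one, via the moreover-clause—and $\bS\ast\bT_\lambda^{2}$ has a $\lambda$-closed dense subset. Consequently $\bQ:=(\bS\ast\bT_\lambda)\times\bM_{\geq n}$ is $\kappa_n$-directed closed, so by Laver indestructibility $\kappa_n$ remains supercompact in $V[\bQ]$; the remaining low collapses $\prod_{i\leq n-2}\Col(\kappa_i,<\kappa_{i+1})$ have size $<\kappa_n$ and preserve this by Levy--Solovay. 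In the resulting model $V[\bP']$ I fix a $\lambda$-supercompactness embedding $j\colon V[\bP']\to M$ with $\crit j=\kappa_n$, $j(\kappa_n)>\lambda$, and $M^{\lambda}\subseteq M$. What separates $V[\bP']$ from $V[\bS\times\bM][\bT_\lambda]$ is then exactly the single collapse $\Col(\kappa_{n-1},<\kappa_n)$.

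\emph{The obstacle: lifting past the last collapse and descending.} The delicate point is to extend $j$ over $\Col(\kappa_{n-1},<\kappa_n)$. Its image $\Col(\kappa_{n-1},<j(\kappa_n))$ factors as this collapse times a $\kappa_{n-1}$-closed remainder $g$; forcing with $g$ and using $M^{\lambda}\subseteq M$ one lifts $j$ to $j^{+}\colon V[\bS\times\bM][\bT_\lambda]\to M[\,\cdot\,]$, whereupon the branch mechanism yields a cofinal branch $c$ of $\mathcal S$ in that extension. I expect this to be the crux, because $g$ is neither $\lambda$-closed nor $\lambda$-c.c., so one cannot argue crudely that it adds no branch, and $c$ must be brought all the way down to $V[\bS\times\bM]$. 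The descent factors into two independent pieces. First, the threading contributes nothing spurious: since $\bS\ast\bT_\lambda^{2}$ has a $\lambda$-closed dense subset, $\bT_\lambda\times\bT_\lambda$ is $\lambda$-distributive over $V[\bS\times\bM]$, and the standard two-generic-branches-must-split argument shows $\bT_\lambda$ adds no cofinal branch to $\mathcal S$. Second—and this is the real work—the auxiliary collapse $g$ must add no cofinal branch to the \emph{narrow} system $\mathcal S$; this is exactly the Magidor--Shelah robustness of narrow systems under highly closed, homogeneous collapse, where one compares two mutually generic branches and uses that the width $\theta$ and the number of relations $|\mathcal R|$ lie below the closure $\kappa_{n-1}$ of $g$ to force agreement with a ground-model branch, rather than relying on a bare closure or chain-condition count (this is precisely the respect in which narrow systems behave better than trees). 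Granting these two preservation facts, $c\in V[\bS\times\bM]$, and since $\mathcal S$ was arbitrary the narrow system property holds at $\lambda$ there.
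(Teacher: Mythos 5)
Your first paragraph (the branch-extraction mechanism at $\delta=\sup j[\lambda]$) and your lifting of $j$ past $\Col(\kappa_{n-1},<\kappa_n)$ using $\kappa_n$-c.c.\ and the factorization of $\Col(\kappa_{n-1},<j(\kappa_n))$ match the paper. But there are two genuine gaps. First, a concrete error: you thread with $\bT_\lambda$ and claim, ``via the structural lemma,'' that $\bS\ast\bT_\lambda$ has a $\lambda$-closed dense subset. The structural lemma only applies to regular $\eta\leq\mu$: every club occurring in the square sequence has order type $\leq\mu$, so a decreasing chain in $\bT_\lambda$ whose union has order type $>\mu$ has no lower bound (the union cannot be placed in any $\mathcal{C}_\alpha$, nor into the top level of an extended condition of $\bS$, since the definition of $\bS$ demands $\otp D\leq\mu$). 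This is exactly why the paper threads only to order type $\kappa_n$: $\bS\ast\bT_{\kappa_n}^{\kappa_{n-2}}$ is $\kappa_n$-directed closed, which is all that Laver indestructibility of $\kappa_n$ requires, and nothing resembling $\lambda$-closure is ever used.

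Second, and more fundamentally, your descent step --- which you yourself flag as ``the real work'' --- is assumed rather than proved: there is no citable preservation theorem saying that the $\kappa_{n-1}$-closed remainder $g$ (or the thread) cannot add a cofinal branch to a narrow system. For systems, unlike trees, the set of levels a branch meets is itself generic, so Silver-style splitting arguments do not go through as stated, and homogeneity is of no help since the branch is not definable. The paper's proof has a different architecture precisely to avoid this: it assumes $\mathcal{S}$ has no cofinal branch in $V[G]$, forces with $\kappa_{n-2}$-many \emph{pairwise mutually generic} copies of $\bT_{\kappa_n}$ together with $\Col(\kappa_{n-1},[\kappa_n,j(\kappa_n)))^{\kappa_{n-2}}$, and lifts once per copy to obtain $\kappa_{n-2}$ realizations of a \emph{system of branches} $\{b_{i,\epsilon}\}$ whose union covers every level of $\lambda$ (this covering, coming from taking all $\epsilon<\eta$ and all $i$, is essential and absent from your single-branch plan). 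Mutual genericity plus the no-branch assumption forces any two realizations to split, and since there are only $\kappa_{n-2}$ realizations, $|\mathcal{R}|$ relations, and $\cf\lambda\geq\kappa_{n-1}>\kappa_{n-2}$, there is a single level $\rho_\star<\lambda$ past which all realizations disagree everywhere; the covering property guarantees each realization meets level $\rho_\star$, and the Pigeonhole Principle (only $\eta\cdot|\mathcal{R}|<\kappa_{n-2}$ pairs $(\xi,i)$ available at that level) yields the contradiction. So the contradiction is extracted from many mutually generic branch systems at once, not from a quotient-preservation lemma; without supplying a proof of your ``Magidor--Shelah robustness'' step, your argument does not close.
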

\begin{proof}
Let $H_S \subseteq \bS$, $H_M\subseteq \bM$ be mutually generic filters. Let $G = H_S \times H_M$. Let us denote by $H_i\subseteq \Col(\kappa_{i-1}, {<}\kappa_{i})$ be the $i$-th coordinate of the generic filter $H_M$ ($i > 0$). Let $H^i$ be the generic filters for all the parts of $\bM$ except the $i$-th coordinate, namely $H^i = \langle H_m \mid m \neq i\rangle$.

Let $\mathcal{S}\in V[G]$ be a narrow system on $I \times \eta$, with relations $\mathcal{R}$. Let us assume, towards a contradiction, that $\mathcal{S}$ has no cofinal branch in $V[G]$. Since the set $I$ will play no role later in the proof, we will restrict ourselves to the notation-wise simpler case in which $I = \lambda$. Let $n \geq 2$ be large enough such that $\kappa_{n - 2} \geq |\eta \times \mathcal{R}|^+$ in $V^{\mathbb{S}\times \mathbb{M}}$.

Let $W_n = V[H_S][H^n].$ Let us force over $W_n$ with $\bT_{\kappa_n}^{\kappa_{n-2}}$. Let $K = \langle K_i \mid i < \kappa_{n-2}\rangle$ be the sequence of pairwise mutually generic filters. We stress that the product, $\bT_{\kappa_n}^{\kappa_{n-2}}$, is taken over $V[G]$ and not over $W_n$.

Fix $\xi < \kappa_{n-2}$. $W_n[K_\xi]\models \kappa_n$ is supercompact since:
\begin{enumerate}
\item $\bS\ast \bT_{\kappa_n}^{\kappa_{n-2}}$ contains a dense $\kappa_n$-directed closed subset,
\item $\prod_{n\leq i < \omega} \Col(\kappa_i, <\kappa_{i+1})$ is $\kappa_n$-directed closed.
\item $\prod_{i < n - 1} \Col(\kappa_i, <\kappa_{i+1})$ has cardinality $\kappa_{n-1}$ which is $<\kappa_n$.
\end{enumerate}
We are using the indestructibility in the two first items and L\'evy-Solovay Theorem in the last one.

Let $j\colon W_n[K_\xi] \to M$ be a $\lambda$-supercompact embedding with $\crit j = \kappa_n$. Since $\Col(\kappa_{n-1}, <\kappa_n)$ is $\kappa_n$-c.c., after forcing with 
\[\Col(\kappa_{n-1}, <j(\kappa_n)) = \Col(\kappa_{n-1}, <\kappa_n)\times\Col(\kappa_{n-1}, [\kappa_n, j(\kappa_n)))\] 
we may extend the elementary embedding $j$ to a $\lambda$-supercompact elementary embedding $\tilde{j}\colon W_n[H_n][K_\xi] \to M[\tilde{j}(H_n)]$. Since $W_n[H_n] = V[G]$, $S\in W_n[H_n]$, so $\tilde{j}(\mathcal{S})$ is defined.

Let $L = \langle L_i \mid i < \kappa_{n-2}\rangle$ be a generic filter for $\Col(\kappa_{n-1}, [\kappa_n, j(\kappa_n)))^{\kappa_{n-2}}$. Note that the forcing that adds $L$ is $\kappa_{n-1}$-closed over $V$, the ground model.

Let $\delta = \sup \tilde{j}^{\prime\prime} \lambda < \tilde{j}(\lambda)$. Let $\leq_i\in \mathcal{R}$ and let
\[b_{i,\epsilon} = \{\langle \alpha, \beta\rangle \mid \tup{j(\alpha), \beta} \leq_{i} \tup{\delta,\epsilon} \text{ in } \tilde{j}(\mathcal{S})\}.\]
Since $|\mathcal{R}|, \eta < \kappa_{n-2} < \crit \tilde{j}$, for some $i, \epsilon$, $b_{i,\epsilon}$ is a cofinal branch and moreover $\bigcup_{i, \epsilon} \{\alpha \mid \exists \beta,\,\tup{\alpha,\beta}\in b_{i,\epsilon}\} = \lambda$.

We say that forcing with $\Col(\kappa_{n-1}, [\kappa_n, j(\kappa_n))) \times \mathbb{T}_{\kappa_n}$ adds a \emph{system of branches} for $\mathcal{S}$. By removing a bounded part we may assume that all the branches in this system of branches are new and cofinal.

In particular the forcing $\Col(\kappa_{n-1}, [\kappa_n, <j(\kappa_n)) )^{\kappa_{n-2}} \times \bT^{\kappa_{n-2}}_{\kappa_n}$ introduces $\kappa_{n-2}$ many distinct realizations for the system of branches $\{\dot{b_j} \mid j \in J\}$. Note that in order to claim that there is no pair of system of branches which are equal we only used the pairwise mutual genericity.

We conclude that in $V[G][H][K][L]$ there are $\kappa_{n - 2}$ different systems of branches, $\{b_j^\alpha \mid \alpha < \kappa_{n-2},\ j\in J\}$. In this model $\kappa_{n-2} \geq |\eta\times \mathcal{R}|^+$ is regular and $\cf \lambda \geq \kappa_{n-1}$. Since for every $\alpha < \beta < \kappa_{n-2}$, and every relation $\leq_i\in \mathcal{R}$, $b^\alpha_i, b^\beta_i$ split at some point below $\lambda$, and since there are only $\kappa_{n-2}$ realizations and only $|\mathcal{R}|$ relations in $\mathcal{R}$, there is $\rho_\star < \lambda$ such that for every $\xi \geq \rho_\star$, and for every $\alpha, \beta$, $b^\alpha_i(\xi) \neq b^\beta_i(\xi)$ (where it is possible that only one of them is defined). By the Pigeonhole Principle there are $\alpha ,\beta < \kappa_{n-2}$ such that $\langle \rho_\star, \xi\rangle \in b^\alpha_i, b^\beta_i$ for the same $\xi, i$, because there are only $|\mathcal{R}|\times\eta$ many possibilities for this pair. This is a contradiction to the choice of $\rho_\star$. We conclude that it is impossible that there was not cofinal branch in $\mathcal{S}$ in the ground model, as wanted.
\end{proof}
Let $W = V^{\mathbb{S}\times\mathbb{M}}$. Note that $\kappa$ is supercompact in $W$, by the Laver indestructibility of $\kappa$.
\begin{theorem}\label{thm: choice of rho}
There is $\rho < \kappa$ such that forcing with $\Col(\omega, \rho^{+\omega})\times \Col(\rho^{+\omega + 1}, < \kappa)$ over $W$ forces the tree property at $\aleph_{\omega + 1}$. Further collapsing the new $\aleph_1$ introduces a weak square at $\aleph_{\omega+1}$.
\end{theorem}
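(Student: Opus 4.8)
The plan is to reduce the tree property at $\lambda=\aleph_{\omega+1}$ to the narrow system property of \autoref{lem: nsp after partial square}, exploiting that the narrow system property survives small forcing while $\kappa$ survives as a supercompact in an intermediate model. First I would record the cardinal arithmetic. In $W$ the collapses in $\mathbb M$ give $\kappa_n=\kappa^{+n}$, hence $\mu=\kappa^{+\omega}$ and $\lambda=\kappa^{+\omega+1}$. The factor $\Col(\omega,\rho^{+\omega})$ has size $\rho^{+\omega}<\kappa$ and makes everything up to $\rho^{+\omega}$ countable, so $\rho^{+\omega+1}$ becomes $\aleph_1$, while $\Col(\rho^{+\omega+1},<\kappa)$ is $\kappa$-c.c.\ and turns $\kappa$ into $(\rho^{+\omega+1})^{+}=\aleph_2$. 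Thus $\kappa_n\mapsto\aleph_{n+2}$, $\mu\mapsto\aleph_\omega$ and $\lambda\mapsto\aleph_{\omega+1}$. The two-step shape of the collapse (rather than $\Col(\omega,<\kappa)$) is chosen exactly so that $\aleph_1=\rho^{+\omega+1}$ is the successor of a cofinality-$\omega$ singular, so that $\aleph_1$ mirrors $\aleph_{\omega+1}$; this is what enables the second assertion and the destructibility of \autoref{thm: destructible tree property}. I would fix $\rho$ by reflecting, off the supercompactness of $\kappa$ in $W$, the structure needed below $\kappa$ so that $\aleph_1$ carries the reflecting configuration inherited from $\mathbb S$; the set of suitable $\rho$ is stationary in $\kappa$, so one exists.

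Next I would transport the narrow system property. Since $|\mathbb P|=\kappa$ and $\kappa^{+}<\lambda$, the forcing $\mathbb P=\Col(\omega,\rho^{+\omega})\times\Col(\rho^{+\omega+1},<\kappa)$ is small relative to $\lambda$; as the narrow system property holds at $\lambda$ in $W$ by \autoref{lem: nsp after partial square}, the $\hat{\mathcal S}$ construction of the Preliminaries shows it still holds at $\lambda=\aleph_{\omega+1}$ in $W[\mathbb P]$.

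For the tree property I would argue by contradiction, assuming $T\in W[\mathbb P]$ is a $\lambda$-Aronszajn tree. Factor $\mathbb P=\mathbb P_0\times\mathbb P_1$ with $\mathbb P_0=\Col(\omega,\rho^{+\omega})$ small and $\mathbb P_1=\Col(\rho^{+\omega+1},<\kappa)$, and work over $W'=W[\mathbb P_0]$, where $\kappa$ remains $\lambda$-supercompact by Levy--Solovay. Take a $\lambda$-supercompact $j\colon W'\to M$ with $\crit j=\kappa$. As $\mathbb P_1$ is $\kappa$-c.c., $j(\mathbb P_1)=\Col(\rho^{+\omega+1},<j(\kappa))$ factors as $\mathbb P_1\times\Col(\rho^{+\omega+1},[\kappa,j(\kappa)))$; using the given generic for $\mathbb P_1$ together with a tail generic built from the $\rho^{+\omega+1}$-closure of the tail and the $\lambda$-closure of $M$, I would lift to $\tilde j\colon W[\mathbb P]\to M[\ast]$. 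Put $\delta=\sup\tilde j''\lambda<\tilde j(\lambda)$. For a node $t$ of $\tilde j(T)$ at level $\delta$, set $b_t=\{s\in T\mid \tilde j(s)<_{\tilde j(T)}t\}$; elementarity guarantees that, as $t$ ranges over level $\delta$, the $b_t$ connect any two levels of $T$ (this is condition (4) of a system) and that their width reflects below $\lambda$. The resulting object is then a narrow system on (a cofinal subset of) $\lambda$, and the preserved narrow system property supplies a cofinal branch, which is a cofinal branch of $T$ --- contradicting that $T$ is Aronszajn.

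I expect the crux to be precisely the organization of the $b_t$ into a genuine \emph{narrow} system inside $W[\mathbb P]$: controlling the width and the number of relations so that both have successors $<\lambda$, ensuring the system is definable in $W[\mathbb P]$ rather than only in $M[\ast]$, and ruling out that the cofinal branch obtained is a spurious one added by the tail collapse. I would handle this with the mutual-genericity counting of \autoref{lem: nsp after partial square}: running $\Col(\rho^{+\omega+1},[\kappa,j(\kappa)))$ in a sufficiently long mutually generic power yields many distinct realizations of the system of branches, and narrowness forces two of them to coincide past their splitting level unless a cofinal branch of $T$ already exists in $W[\mathbb P]$, while the closure of the tail guarantees no new cofinal branch of $T$ is created. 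Finally, for the second assertion, the partial square added by $\mathbb S$ lives on $S^{\lambda}_{\geq\kappa}=S^{\aleph_{\omega+1}}_{\geq\aleph_2}$ with width $<\mu=\aleph_\omega$; after forcing with $\Col(\omega,\aleph_1)$ every remaining point of $\lambda$ has cofinality $\omega$, so I would complete $\mathcal C$ to a weak square $\square^{*}_{\aleph_\omega}$ by threading each missing point with a coherently chosen cofinal $\omega$-sequence, and a weak square at $\aleph_\omega$ yields a special $\aleph_{\omega+1}$-Aronszajn tree by Jensen's construction.
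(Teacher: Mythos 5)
There is a genuine gap, and it sits exactly where the theorem's content lies: the choice of $\rho$. You fix $\rho$ at the outset via an unspecified reflection argument (``the set of suitable $\rho$ is stationary''), but you never identify what property of $\rho$ is used, and your subsequent argument never invokes one --- so as written it would prove that \emph{every} $\rho$ works, a claim nothing in your sketch supports. The paper's proof is structured in the opposite direction: assume every $\rho<\kappa$ is bad, fix names $\dot T_\rho$ for all $\rho$ simultaneously, and apply a single $\lambda$-supercompact $j\colon W\to M$ with \emph{no lifting through $\mathbb{L}_\rho$ at all}. Working with the forcing relation inside $M$ below the node $\langle\delta,0\rangle$, one uses the $\lambda$-closure of $\Col(\kappa^{+\omega+1},<j(\kappa))^M$ to choose the collapse coordinates $q_\alpha$ decreasing, and the regularity of $\lambda$ together with $|\Col(\omega,\kappa^{+\omega})|<\lambda$ to stabilize the first coordinate and bound the witnesses $\zeta_\alpha$ below $j(\kappa^{+n})$ on a cofinal $I$. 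Elementarity then produces a narrow system \emph{in $W$}, with domain $I\times\kappa^{+n}$ and relations indexed by $\bigcup_{\rho<\kappa}\mathbb{L}_\rho\times\{\rho\}$ (size $\kappa$, hence narrow since $\kappa^+<\lambda$); a cofinal branch, supplied by Lemma~\ref{lem: nsp after partial square} applied in $W$, hands you simultaneously the good $\rho$ and a condition $p\in\mathbb{L}_\rho$ forcing a cofinal branch through $\dot T_\rho$ --- contradiction. The existence of $\rho$ is the \emph{output} of the narrow-system argument, not an input, and your proposal has no substitute for this step.

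Two technical steps in your route also fail as written. First, you cannot lift $j$ to $W[\mathbb{P}]$ by ``building a tail generic from the $\rho^{+\omega+1}$-closure of the tail and the $\lambda$-closure of $M$'': the forcing $\Col(\rho^{+\omega+1},[\kappa,j(\kappa)))$ has $j(\kappa)>\lambda$ many dense subsets in $M$ while its closure is only $\rho^{+\omega+1}\le\kappa$, so no internal diagonalization reaches genericity; one must genuinely force with the tail (as is done in the proof of Lemma~\ref{lem: nsp after partial square}), after which your branches $b_t$ live in a further extension rather than in $W[\mathbb{P}]$. Second, the repair you propose --- the mutual-genericity counting --- only functions for a \emph{narrow} system: the pigeonhole there needs the width and the number of relations to have successors below the preserved cofinality of $\lambda$ (in Lemma~\ref{lem: nsp after partial square} this is the hypothesis $\kappa_{n-2}\ge|\eta\times\mathcal{R}|^+$), whereas your system of $b_t$'s, with $t$ ranging over level $\delta$ of $\tilde j(T)$, has width up to $j(\mu)$, and the tree itself has levels of size up to $\aleph_\omega$. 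Your phrase ``their width reflects below $\lambda$'' is precisely the missing reflection-and-pigeonhole step (bounding witnesses below $j(\kappa^{+n})$ on a cofinal set, and re-indexing by ground-model conditions instead of $j$-side conditions) that must be carried out \emph{before} any narrow-system machinery can apply. Your cardinal-arithmetic bookkeeping and the final paragraph --- completing the partial square on $S^{\lambda}_{\ge\kappa}$ to a weak square after collapsing the new $\aleph_1$, since every point outside the domain then has cofinality $\omega$ --- do agree with the paper and are correct.
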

\begin{proof}
Assume otherwise.
Let $\mathbb{L}_{\rho} = \Col(\omega, \rho^{+\omega})\times \Col(\rho^{+\omega + 1}, < \kappa)$.
For every $\rho < \kappa$, let $\dot{T}_\rho$, be a $\mathbb{L}_{\rho}$-name for an Aronszajn tree at $\lambda$. Since $\kappa$ is supercompact, there is $j\colon W \to M$ such that $^{\lambda}M \subseteq M$. By our assumption, $M$ models that $\Vdash_{j(\mathbb{L})_{\kappa}} \text{``}j(\dot{T})_\kappa$ is an Aronszajn tree''.
Let $\delta = \sup j `` \lambda < j(\lambda)$, and let $t = \tup{\delta, 0}$.

Work in $M$. For every $\alpha < \lambda$, pick a condition $p_\alpha = \langle c_\alpha, q_\alpha\rangle$ such that
\[\exists \zeta < j(\kappa^{+\omega}),\, p_\alpha \Vdash_{j(\mathbb{L}_\kappa)} \langle j(\alpha), \zeta\rangle \leq_{j(\dot{T})_\kappa} \check{t}\]

Let us denote this $\zeta$ by $\zeta_\alpha$. We may pick the conditions $p_\alpha$ in a way that $q_\alpha$ is a decreasing sequence. Since $\lambda$ is regular and $|\Col(\omega, \kappa^{+\omega})| = \kappa^{+\omega} < \lambda$, there is a cofinal set $I\subseteq \lambda$, $n < \omega$ and $c_\star\in \Col(\omega, \kappa^{+\omega})$ such that for every $\alpha \in I$, $c_\alpha = c_\star$ and $\zeta_\alpha < j(\kappa^{+n})$.

By elementarity, for every $\alpha, \beta\in I$, there are $\gamma, \gamma^\prime < \kappa^{+n}$, $\rho < \kappa$ and $p\in \mathbb{L}_\rho$ such that $p\Vdash_{\mathbb{L}_\rho} \langle \alpha, \gamma\rangle \leq_{\dot{T}_\rho}\langle \beta, \gamma^\prime\rangle$.

This defines a narrow system in $W$: The domain of the system is $I \times \kappa^{+n}$. The indices set is $\bigcup_{\rho < \kappa} \mathbb{L}_\rho \times \{\rho\}$. $\langle \alpha, \xi\rangle \leq_{p, \rho} \langle \beta, \zeta\rangle$ iff $p\Vdash_{\mathbb{L}_\rho} \langle \alpha, \xi\rangle \leq_{\dot{T}_\rho} \langle \beta, \zeta\rangle$.

By the narrow system property there is a cofinal branch in $W$. Namely there are $\rho < \kappa$, $p\in\mathbb{L}_\rho$ and $\gamma < \kappa^{+n}$ such that for every $\alpha, \beta\in I$, $p\Vdash_{\mathbb{L}_\rho} \langle \alpha, \gamma\rangle \leq \langle \beta, \gamma\rangle$.

This proves that the tree property holds at $\aleph_{\omega+1}$ in the generic extension.

For the last claim, note that after collapsing $\aleph_1$, for every $\gamma < \aleph_{\omega+1}$ either $\cf \gamma = \omega$ or $\mathcal{C}_\gamma\neq\emptyset$. Thus, one can complete the partial square to a full $\square_{\aleph_{\omega}, <\aleph_{\omega}}$ by adding cofinal $\omega$-sequences.
\end{proof}
\section{Indestructible tree property}\label{sec:indestructible}
In this section we will build three models in which the tree property at a successor of singular cardinal is indestructible under certain class of forcing notions. We start by building a model in which the tree property holds at $\aleph_{\omega^2 + 1}$ and it is indestructible under any forcing $\mathbb{P}$ of cardinality less than $\aleph_{\omega^2}$. Similarly, we will construct a model for the tree property at $\aleph_{\omega + 1}$ in which the tree property still holds after any $\sigma$-closed forcing of cardinality $<\aleph_{\omega}$.

We remark that we do not know whether it is possible to force the tree property at $\aleph_{\omega+1}$ to be indestructible under any $\aleph_{\omega+1}$-closed forcing notions.

\subsection{Indestructible Tree Property for \texorpdfstring{$\aleph_{\omega^2 + 1}$}{aleph omega2 + 1}}\label{subsec: indestructible omega^2}
In this subsection, we will show that in Sinapova's model for the tree property at $\aleph_{\omega^2+1}$ \cite{Sinapova2012_aleph_omega2} (but without the failure of $\SCH$, as in \cite{Sinapova2012aleph_omega}), the tree property is indestructible under small forcings. We start with some simple observations:
\begin{lemma}
Let $\lambda$ be a cardinal such that the tree property holds at $\lambda^+$ and it is indestructible by any forcing of the form $\Col(\omega, \rho)$ for $\rho < \lambda$. Then the tree property at $\lambda^+$ is indestructible by any forcing of size $<\lambda$. Moreover, it is enough to assume that for every $\rho < \lambda$ there is $\rho\leq \rho^\prime < \lambda$ such that $\Col(\omega,\rho^\prime)$ forces the tree property at $\lambda^+$.
\end{lemma}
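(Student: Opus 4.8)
The plan is to combine two standard tools: the \emph{absorption} property of the Levy collapse (any small forcing can be swallowed by a Levy collapse of sufficient size) and the fact that forcing of size $<\lambda^+$ cannot add a cofinal branch to a tree of height $\lambda^+$. First I would reduce to the case where $\mathbb{P}$ is separative (passing to the separative quotient changes nothing about generic extensions) and nontrivial. Set $\mu = \max(\omega, |\mathbb{P}|) < \lambda$, and invoke the hypothesis in its stronger ``moreover'' form to fix $\mu'$ with $\mu \leq \mu' < \lambda$ such that $\Col(\omega,\mu')$ forces the tree property at $\lambda^+$. Since $|\mathbb{P}| \leq \mu'$, the absorption lemma for the Levy collapse gives $\mathbb{P} \ast \dot{\Col}(\omega,\mu') \cong \Col(\omega,\mu')$; in particular $V^{\mathbb{P}}$ sits inside $V^{\Col(\omega,\mu')}$ as an intermediate model, and the quotient forcing taking $V^{\mathbb{P}}$ up to $V^{\Col(\omega,\mu')}$ is (a version of) $\Col(\omega,\mu')$, a forcing of size $\mu' < \lambda$.

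The key general fact I would isolate is a branch-preservation lemma: if $\kappa$ is regular, $T$ is a tree of height $\kappa$ with no cofinal branch, and $Q$ is any forcing with $|Q| < \kappa$, then $Q$ adds no cofinal branch to $T$. The proof is the usual one: given a name $\dot{b}$ for a cofinal branch, for each $q \in Q$ the set $b_q = \{t : q \Vdash t \in \dot{b}\}$ is a chain in $V$, hence bounded in $T$ (else $V$ would already contain a cofinal branch), say below level $\gamma_q < \kappa$; as $|Q| < \kappa$ and $\kappa$ is regular, $\gamma^\ast = \sup_q \gamma_q < \kappa$, and then any node of the realized branch at level $\geq \gamma^\ast$ yields a contradiction. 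I would apply this with $\kappa = \lambda^+$ and $Q = \Col(\omega,\mu')$, noting $|Q| = \mu' < \lambda < \lambda^+$.

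Finally I would run the argument by contradiction. Suppose the tree property fails in $V^{\mathbb{P}}$; since $\mathbb{P}$ is $\lambda$-cc it preserves $\lambda^+$, so there is a genuine $\lambda^+$-Aronszajn tree $T \in V^{\mathbb{P}}$. Forcing further with $\Col(\omega,\mu')$ preserves $\lambda^+$ and all level sizes (again by $\lambda$-cc), so $T$ remains a $\lambda^+$-tree, and by the branch-preservation lemma it remains without a cofinal branch; thus $T$ witnesses the failure of the tree property at $\lambda^+$ in $V^{\mathbb{P} \ast \dot{\Col}(\omega,\mu')} = V^{\Col(\omega,\mu')}$, contradicting the choice of $\mu'$. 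The ``moreover'' clause is already built into the choice of $\mu'$, and the plain statement follows by taking $\mu' = \mu$. I do not expect a deep obstacle here; the only point demanding care is the bookkeeping around absorption, namely checking that the absorbing collapse can be chosen with $\mu' < \lambda$ while still dominating $|\mathbb{P}|$, and that $\lambda^+$ together with the relevant level sizes is preserved at every stage so that ``the tree property at $\lambda^+$'' refers to the same cardinal throughout.
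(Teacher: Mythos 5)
Your proposal is correct and takes essentially the same route as the paper: absorb $\mathbb{P}$ into a Levy collapse $\Col(\omega,\mu')$ with $\mu\leq\mu'<\lambda$, so that $V^{\mathbb{P}}$ sits below $V^{\Col(\omega,\mu')}$ with a quotient of size $\mu'<\lambda$, and then invoke the standard fact that a forcing of size less than $\lambda^+$ cannot add a cofinal branch to a $\lambda^+$-Aronszajn tree. The only difference is presentational: you spell out the absorption argument, the branch-preservation lemma, and the cardinal-preservation bookkeeping, and you fold the ``moreover'' clause into the choice of $\mu'$, all of which the paper's terser proof leaves implicit.
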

\begin{proof}
Let $\mathbb{P}$ be a forcing notion of cardinality $<\lambda$. Let $\mu = |\mathbb{P}|$. $\Col(\omega, \rho)$ adds a generic filter for $\mathbb{P}$. Let $G\subseteq \mathbb{P}$ be a generic filter. The quotient forcing $\Col(\omega, \rho)/G$ has cardinality at most $\rho$ and therefore it does not add a cofinal branch to any $\lambda^+$-Aronszajn tree. Since the tree property holds after forcing with $\Col(\omega,\rho)$ and the forcing $\Col(\omega, \rho)/G$ does not add a branch to Aronszajn tree -- the tree property holds in $V[G]$ as well.
\end{proof}
\begin{theorem}\label{thm: indestructible tree property}
Let $\kappa = \kappa_0 < \kappa_1 < \cdots$ be a sequence of $\omega$ supercompact cardinals. Let $\mu = \sup \kappa_n$ and $\lambda = \mu^+$. There is a generic extension in which $\kappa = \aleph_{\omega^2}$, $\lambda = \aleph_{\omega^2 +1}$ and for every $\rho < \mu$, the tree property holds after forcing with $\Col(\omega, \rho)$.
\end{theorem}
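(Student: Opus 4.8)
The goal is to construct a model where $\kappa = \aleph_{\omega^2}$, $\lambda = \mu^+ = \aleph_{\omega^2+1}$, the tree property holds at $\lambda$, and it survives $\Col(\omega, \rho)$ for every $\rho < \mu$. This is Sinapova's model for the tree property at $\aleph_{\omega^2+1}$, and the task is to verify the indestructibility under the collapses $\Col(\omega, \rho)$. Combined with the preceding lemma, this will yield indestructibility under all forcings of size $<\aleph_{\omega^2}$.

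My plan is to follow the structure of the proof of \autoref{thm: choice of rho} closely, since the argument there for the tree property at $\aleph_{\omega+1}$ via the narrow system property is a template for exactly this kind of indestructibility statement. First I would set up Sinapova's forcing (a Prikry-type iteration or diagonal supercompact Prikry forcing interleaved with collapses) that makes $\kappa = \kappa_0$ into $\aleph_{\omega^2}$ and singularizes the $\kappa_n$'s appropriately, arranging that $\lambda = \mu^+$ becomes $\aleph_{\omega^2+1}$ and the tree property holds there. The key feature I need from this construction is that the narrow system property at $\lambda$ is available in the extension, and crucially that enough supercompactness of the $\kappa_n$ is preserved (via Laver indestructibility and the chain-condition/closure bookkeeping) to run a branch-lifting argument after the additional collapse $\Col(\omega, \rho)$.

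The main work, and the step I expect to be the hard part, is showing that after forcing with $\Col(\omega, \rho)$ for arbitrary $\rho < \mu$, any purported $\lambda$-Aronszajn tree $\dot T$ in the further extension yields a narrow system in an intermediate model to which the narrow system property applies. The strategy mirrors \autoref{thm: choice of rho}: fix $n$ large enough that $\rho < \kappa_n$ and that the relevant cardinal $\kappa_{n}^{+m}$ exceeds the width and index-set bounds; take a supercompact embedding $j\colon W \to M$ with critical point above $\rho$; let $\delta = \sup j''\lambda$ and a node $t = \tup{\delta, 0}$ in $j(\dot T)$; and for each $\alpha < \lambda$ extract a condition forcing $\tup{j(\alpha), \zeta_\alpha}$ below $t$. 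A pigeonhole/regularity argument (using that $\lambda$ is regular and the collapse is small relative to $\lambda$) then pins down a cofinal $I \subseteq \lambda$ and a fixed level $\kappa^{+n}$ on which the branch data concentrate, and reflecting by elementarity produces a narrow system whose index set is $\bigcup_{\rho' } \mathbb{L}_{\rho'} \times \{\rho'\}$ exactly as in the earlier proof. The delicate point is ensuring the embedding $j$ survives and lifts through both $\Col(\omega,\rho)$ and the threading/collapse forcings used in Sinapova's construction, so that the node $t$ and the conditions $p_\alpha$ genuinely live in $M$; this requires the indestructibility of the $\kappa_n$ under the $\kappa_n$-directed closed tail forcings together with Levy--Solovay for the small initial collapses.

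Finally I would apply the narrow system property in $W$ to obtain a cofinal branch of the system, which translates back to a single condition $p \in \mathbb{L}_{\rho'}$ forcing a cofinal branch through $\dot T$, contradicting that $\dot T$ was Aronszajn. Since $\rho < \mu$ was arbitrary, the tree property at $\lambda = \aleph_{\omega^2+1}$ holds after every $\Col(\omega,\rho)$, which is what was to be shown. I would expect to invoke \autoref{lem: nsp after partial square} (or its analogue in Sinapova's setting) for the narrow system property itself, so that the novel content here is entirely in the branch-extraction and reflection step; the routine cardinal-arithmetic bookkeeping that $\kappa$ lands on $\aleph_{\omega^2}$ and $\lambda$ on $\aleph_{\omega^2+1}$ I would cite directly from \cite{Sinapova2012}.
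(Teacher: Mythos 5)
Your proposal transplants the argument of Theorem~\ref{thm: choice of rho} to Sinapova's model, but it breaks precisely at the two points where the Prikry structure of $\mathbb{P}$ matters. First, the embedding set-up is not available: in this construction the preparation $\mathbb{M}=\prod_{n<\omega}\Col(\kappa_n,<\kappa_{n+1})$ collapses every $\kappa_n$ with $n\geq 1$ to $\kappa^{+n}$, so in $W=V^{\mathbb{M}}$ the cardinals $\kappa_n$, $n\geq 1$, are not even measurable; ``fix $n$ with $\rho<\kappa_n$ and take $j\colon W\to M$ with critical point above $\rho$'' has no meaning here. The only usable embeddings have critical point $\kappa$, must be compatible with the measures $\mathcal{U}_n$, and act on $j(\mathbb{P})$ below the master condition $p^\smallfrown\langle j''\kappa^{+n}\rangle$, as in the sketch of Theorem~\ref{thm: tree property at aleph omega^2 + 1}. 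Second, and more seriously, your concluding step --- that a branch of the narrow system ``translates back to a single condition forcing a cofinal branch through $\dot T$'' --- is valid in Theorem~\ref{thm: choice of rho} only because there the forcing under analysis is $\mathbb{L}_\rho$, a product of Levy collapses of size $\kappa<\lambda$, so the system can be indexed by actual conditions and one index forces the entire chain. Here the forcing is $\mathbb{P}\times\Col(\omega,\rho)$, and $|\mathbb{P}|>\lambda$ (conditions carry measure-one sets and collapse functions), so indexing the system by conditions destroys narrowness; the only way to keep the system narrow is to index by stems, and a stem forces nothing by itself. Converting a stem-indexed system branch into an actual branch of $\dot T$ is exactly the hard second half of Sinapova's proof (the inductive amalgamation producing conditions $p_\alpha$ with $p_\alpha\wedge p_\beta\Vdash\langle\alpha,\zeta_\star\rangle\leq_{\dot T}\langle\beta,\zeta_\star\rangle$, followed by the chain-condition argument), which your sketch omits entirely when it asserts that the novel content lies solely in the branch-extraction and reflection step.

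The paper's proof avoids redoing any of this by an idea absent from your proposal: the shifted forcings $\mathbb{P}_s$. Sinapova's argument applies verbatim to $\mathbb{P}_n$, since it is a forcing of literally the same form, and the projection $\pi\colon\mathbb{P}_n/q\to\mathbb{P}\times\Col(\omega,\rho_n^{+\omega})$ together with Lemma~\ref{lem: quotient between collapses and shift} shows that the extension by $\mathbb{P}\times\Col(\omega,\rho_n^{+\omega})$ can be completed, by a further forcing of size $\rho_n^{+\omega}<\lambda$, to the extension by $\mathbb{P}_n/q$, where the tree property holds; since a forcing of size $<\lambda$ cannot add a branch to a $\lambda$-Aronszajn tree, the tree property already holds after the collapse. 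Note also that the relevant collapses are $\Col(\omega,\aleph_{\omega\cdot n})$ where $\aleph_{\omega\cdot n}=\rho_n^{+\omega}$ is determined by the generic Prikry sequence, so the argument must be run below conditions deciding $\rho_n$ --- a point the shift construction handles automatically and your uniform ``for arbitrary $\rho<\mu$'' reflection argument does not address. If you insist on the direct route, you would have to rerun the whole of Sinapova's proof for $\mathbb{P}/p\times\Col(\omega,\eta)$, treating the collapse condition as part of the stem; that is plausibly doable, but it is exactly the work the shift trick is designed to avoid.
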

In order to prove this theorem, we will work with Sinapova's model for the tree property at $\aleph_{\omega^2+1}$ from \cite{Sinapova2012_aleph_omega2}. We will not need to violate \SCH at this point, so the proof is somewhat simpler at some points.

The main idea behind the indestructibility is that one can define a projection $f\colon \bP \times \Col(\omega, \rho) \to\bP_n$ that shifts the Prikry sequence by $n$ steps to the left, where $\bP_n$ is a ``shifted'' version of the forcing $\bP$ which forces the tree property as well. This way, we can analyze the sets that were added by a forcing of the form $\Col(\omega, \rho)$ simply by shifting the first element of the Prikry sequence to be above $\rho$. 

We start with a well known fact:
\begin{lemma}\label{lem: nsp with collapses}
Let $\bM = \prod_{n < \omega} \Col(\kappa_n, < \kappa_{n+1})$ - a full support product of Levy collapses.
In $V^{\bM}$ the narrow branch property holds at $\lambda^+$.
\end{lemma}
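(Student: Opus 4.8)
The plan is to reproduce the branch-counting argument from the proof of \autoref{lem: nsp after partial square}, with the threading forcing $\bT$ replaced by the Levy collapses already present in $\bM$. Let $\mathcal{S} = \tup{I,\theta,\mathcal{R}}$ be a narrow system of height $\lambda^+ = \mu^{++}$, so that $\theta,|\mathcal{R}| \leq \mu$, and as in the earlier proof I would assume $I = \mu^{++}$. Suppose toward a contradiction that $\mathcal{S}$ has no cofinal branch in $V^{\bM}$. In the main case $\theta,|\mathcal{R}| < \mu = \sup_n \kappa_n$ I would fix $n$ large enough that $\kappa_{n-2} \geq (\theta \times |\mathcal{R}|)^+$; this simultaneously makes $\theta,|\mathcal{R}|$ smaller than the critical point of the embedding used below and makes $\kappa_{n-2}$ a regular cardinal exceeding the number $|\mathcal{R}| \times \theta$ of possible (relation, node) pairs on a single level.

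Next I would set up the embedding. Factor $\bM = \bM_{<n-1} \times \Col(\kappa_{n-1},<\kappa_n) \times \bM_{\geq n}$, where $\bM_{\geq n} = \prod_{m\geq n}\Col(\kappa_m,<\kappa_{m+1})$ is $\kappa_n$-directed closed and $\bM_{<n-1} = \prod_{m<n-1}\Col(\kappa_m,<\kappa_{m+1})$ has cardinality $<\kappa_n$. Using Laver indestructibility over $\bM_{\geq n}$ and Levy--Solovay over $\bM_{<n-1}$, the cardinal $\kappa_n$ remains supercompact after these two factors, while the remaining factor $\Col(\kappa_{n-1},<\kappa_n)$ is $\kappa_n$-c.c.; exactly as in \autoref{lem: nsp after partial square} this lets me lift a $\mu^{++}$-supercompact embedding $j\colon V^{\bM_{\geq n}\times\bM_{<n-1}} \to M$ with $\crit j = \kappa_n$ to an embedding $\tilde{j}$ defined on all of $V^{\bM}$ after forcing with $\Col(\kappa_{n-1},[\kappa_n,j(\kappa_n)))$. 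Setting $\delta = \sup \tilde{j}{}''\mu^{++} < \tilde{j}(\mu^{++})$ and fixing $\gamma \in \tilde{j}(I)$ with $\gamma \geq \delta$, I would read off, for each $R\in\mathcal{R}$ and each node $\tup{\gamma,\epsilon}$ of $\tilde{j}(\mathcal{S})$, the set $b_{R,\epsilon} = \{\tup{\alpha,\zeta} \mid \tilde{j}(\mathcal{S})\models \tup{j(\alpha),j(\zeta)}\,\tilde{j}(R)\,\tup{\gamma,\epsilon}\}$. Tree-likeness of $\tilde{j}(R)$ makes each $b_{R,\epsilon}$ an $R$-chain, and because $\theta,|\mathcal{R}| < \crit\tilde{j}$ (so $\tilde{j}(\theta)=\theta$ and $\tilde{j}(\mathcal{R})=\tilde{j}''\mathcal{R}$) every witness connecting $j(\alpha)$ to $\gamma$ lies in the range of $\tilde{j}$; hence the union of the $b_{R,\epsilon}$ is cofinal in $I$. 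Thus $\Col(\kappa_{n-1},[\kappa_n,j(\kappa_n)))$ adds a cofinal system of branches for $\mathcal{S}$.

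To manufacture many such systems I would pass to the full-support power $\Col(\kappa_{n-1},[\kappa_n,j(\kappa_n)))^{\kappa_{n-2}}$ and take $\kappa_{n-2}$ pairwise mutually generic filters, just as the filters $K$ and $L$ are used in \autoref{lem: nsp after partial square}. Mutual genericity forces the resulting $\kappa_{n-2}$ systems of branches to be pairwise distinct, so for each relation and each two realizations the corresponding branches split below $\mu^{++}$. Since this product is $\kappa_{n-1}$-closed it adds no new $<\kappa_{n-1}$-sequences, whence $\kappa_{n-2}$ stays regular and $\cf(\mu^{++}) \geq \kappa_{n-1}$; therefore there is a single $\rho_\star < \mu^{++}$ past which all $\kappa_{n-2}$ realizations are pairwise distinct on every level. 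But on each level there are only $|\mathcal{R}|\times\theta < \kappa_{n-2}$ possible (relation, node) values, so by the Pigeonhole Principle two distinct realizations must agree on some level $\geq \rho_\star$ --- a contradiction. Hence $\mathcal{S}$ had a cofinal branch in $V^{\bM}$ after all.

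The hard part will not be the count, which is identical to the one already carried out, but the two places where the choice of a large $n$ is used: absorbing the boundary factor $\Col(\kappa_{n-1},<\kappa_n)$ via its $\kappa_n$-chain condition so that $\tilde{j}$ is genuinely defined on all of $V^{\bM}$, and verifying cofinality of the branches read from $\tilde{j}$, which needs $\theta,|\mathcal{R}| < \crit\tilde{j}$. The genuinely delicate point is therefore the remaining case $\theta = \mu$ or $|\mathcal{R}| = \mu$, where no single $\kappa_n$ dominates the width; I expect this to be the main obstacle, to be handled by a preliminary reflection reducing a width-$\mu$ narrow system at $\mu^{++}$ to one of width $<\mu$ (using $\cf\mu = \omega$ and the cofinality of the $\kappa_n$ in $\mu$) before running the argument above.
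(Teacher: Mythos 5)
Your argument is essentially the paper's own proof: the paper proves this lemma precisely by the adaptation of Lemma~\ref{lem: nsp after partial square} you describe, with the $\kappa_n$-directed closed tail $\prod_{m\geq n}\Col(\kappa_m,<\kappa_{m+1})$ playing the role of $\bS\ast\bT_{\kappa_n}^{\kappa_{n-2}}$ (via Laver indestructibility), the finite product $\prod_{m<n-1}\Col(\kappa_m,<\kappa_{m+1})$ absorbed by Levy--Solovay, the boundary factor $\Col(\kappa_{n-1},<\kappa_n)$ absorbed through its $\kappa_n$-c.c.\ by forcing with $\Col(\kappa_{n-1},[\kappa_n,j(\kappa_n)))$, and the same $\kappa_{n-2}$-many mutually generic copies followed by the splitting/pigeonhole count. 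The ``remaining case'' $\theta=\mu$ or $|\mathcal{R}|=\mu$ that you flag as the main obstacle is an artifact of reading the statement literally: the ``$\lambda^+$'' (like ``narrow branch property'') is a slip for $\lambda=\mu^+$, as both applications confirm --- Theorem~\ref{thm: tree property at aleph omega^2 + 1} invokes the lemma for systems of height $\lambda$ and width $\kappa^{+n_\star}<\mu$, and Section~\ref{subsec: indestructible omega closed} cites it as giving the narrow system property at $\kappa^{+\omega+1}$, which is $\mu^+$ in $V^{\bM}$. At height $\mu^+$, narrowness means $\theta^+,|\mathcal{R}|^+<\mu^+$, i.e.\ width strictly below $\mu=\sup_n\kappa_n$, so your main case is the entire proof and no preliminary reflection of width-$\mu$ systems is needed (nor would your sketched reduction at $\mu^{++}$ be routine --- it is fortunate the paper does not require it).
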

The proof is similar to the proof of Lemma~\ref{lem: nsp after partial square} and appears in \cite{Neeman2014}. 

Work in $V^{\mathbb{M}}$. The cardinal $\kappa = \kappa_0$ is still supercompact, by the Laver indestructibility. Let $\mathcal{U}$ be a normal measure on $P_\kappa \lambda$ in $V^{\mathbb{M}}$. Let $\mathcal{U}_n$ be the projection of $\mathcal{U}$ to $P_\kappa \kappa_n$ for $n < \omega$.

Let $j_n\colon W\to N_n \cong \Ult(W, \mathcal{U}_n)$ be the elementary embedding derived from $\mathcal{U}_n$. Let us construct an $N_n$-generic filter $H_n$ for the forcing $\Col(\kappa^{+\omega + 2}, < j(\kappa))^{N_n}$. This is possible by the standard arguments: the forcing notion $\Col(\kappa^{+\omega + 2}, < j(\kappa))^{N_n}$ is $\kappa^{+n+1}$-closed in $W$ and has only $\kappa^{+n+1}$-dense subsets in $N_n$ (as counted by $V^{\mathbb{M}}$).

Let us define the main forcing notion $\mathbb{P}$:

A condition $p\in \mathbb{P}$ has the following form
\[p = \langle d_0, a_0, c_0, \dots, a_{n-1}, c_{n-1}, A_n, C_n, \dots\rangle\]
where:
\begin{enumerate}
\item $a_i\in P_\kappa \kappa^{+i}$ and $A_i \in \mathcal{U}_i$. Let $\rho_i = a_i\cap \kappa$ if $i < n$ and $\rho_i = \kappa$ otherwise.
\item $d_0 \in \Col(\omega, \rho_0^{+\omega})$ if $\rho_0 < \kappa$ and otherwise $d_0\in \Col(\omega, \kappa)$.
\item $c_i \in \Col(\rho_i^{+\omega + 2}, <\rho_{i+1})$
\item $C_i\colon A_i \to W$ such that $C_i(a) \in \Col((a\cap \kappa)^{+\omega + 2}, < \kappa)$ for every $a\in A_i$ and $[C_i]_{\mathcal{U}_i}\in H_i$.
\end{enumerate}
$n$ is called the length of $p$ and we denote $\len(p)=n$.

A condition $p$ is stronger than $q$ ($p \leq q$) if:
\begin{enumerate}
\item $\len(p) \geq \len(q)$
\item $d^p_0 \leq d^q_0$.
\item $a_i^p = a_i^q$ and $c^p_i \leq c^q_i$ for every $i < \len(q)$.
\item $a_i^p \in A^q_i$ and $c_i^p \leq C_i^q(a_i)$ for $\len(q)\leq i < \len(p)$
\item $A_i^p \subseteq A_i^q$ for $i\geq\len(p)$.
\item $C^p_i(a) \leq C^q_i(a)$ for every $a\in A^p_i$.
\end{enumerate}

For the proof of Theorem \ref{thm: indestructible tree property}, we will also need to consider the following shifted version of $\mathbb{P}$. For every $s < \omega$, we define the forcing $\mathbb{P}_s$.

A condition $p\in \mathbb{P}_s$ has the following form \[p = \langle d_0, a_0, c_0, \dots, a_{n-1}, c_{n-1}, A_n, C_n, \dots\rangle\] where:
\begin{enumerate}
\item $a_i\in P_\kappa \kappa^{+ i + s}$ and $A_i \in \mathcal{U}_{i + s}$. Let $\rho_i = a_i\cap \kappa$ if $i < n$ and $\rho_i = \kappa$ otherwise.
\item $d_0 \in \Col(\omega, \rho_0^{+\omega})$ if $\rho_0 < \kappa$ and otherwise $d_0\in \Col(\omega, \kappa)$.
\item $c_i \in \Col(\rho_i^{+\omega + 2}, <\rho_{i+1})$
\item $C_i\colon P_\kappa \kappa^{+ i + s} \to W$ such that $C_i(a) \in \Col((a\cap \kappa)^{+\omega + 2}, < \kappa)$ for every $a\in A_i$ and $[C_i]_{\mathcal{U}_{i + s}}\in H_{i + s}$.
\end{enumerate}
We order the conditions in the same way as we did for $\mathbb{P}$. Note that $\mathbb{P}_0 = \mathbb{P}$.

\begin{theorem}[Sinapova]\label{thm: tree property at aleph omega^2 + 1}
For every $s < \omega$, $\mathbb{P}_s$ forces that $\lambda = \aleph_{\omega^2 + 1}$ and the tree property holds in $\lambda$.
\end{theorem}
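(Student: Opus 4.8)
The statement has two independent parts: the cardinal structure and the tree property. For the cardinal structure I would run the usual analysis of diagonal supercompact Prikry forcing. The key step is the Prikry property for $(\mathbb{P},\leq,\leq^\ast)$, where $\leq^\ast$ is the direct extension order (same length and same $a_i$'s, stronger collapse coordinates and smaller measure-one sets). Given a statement of the forcing language and a condition $p$ of length $n$, one shrinks the measure-one sets $A_i$ and the guiding functions $C_i$ by repeated normal diagonal intersections, using the $\kappa$-completeness and normality of the $\mathcal{U}_i$ together with the closure of the collapse coordinates, to obtain $p^\ast\leq^\ast p$ deciding it. Since $(\mathbb{P},\leq^\ast)$ is suitably closed, no cardinal below $\kappa$ is collapsed except as dictated by the interleaved collapses, and a count of the surviving cardinals block by block shows that they have order type $\omega^2$ below $\kappa=\sup_n\rho_n$ (the collapse $c_i\in\Col(\rho_i^{+\omega+2},\rho_{i+1})$ turning $\rho_{i+1}$ into the $(\omega+2)$-nd successor of $\rho_i$); thus $\kappa$ becomes $\aleph_{\omega^2}$, while the generic sequence collapses $(\kappa,\mu]$ and $\lambda=\mu^+$ is preserved as $\kappa^+=\aleph_{\omega^2+1}$.

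For the tree property I would imitate the reflection argument of Theorem~\ref{thm: choice of rho}, the only new feature being the Prikry structure. Suppose toward a contradiction that a condition forces $\dot T$ to be a $\lambda$-Aronszajn tree whose $\alpha$-th level consists of nodes $\langle\alpha,\xi\rangle$ with $\xi$ below the width (of size $\leq\mu$) of that level. Since $\kappa$ is supercompact in $W=V^{\mathbb{M}}$, fix a $\lambda$-supercompact embedding $j\colon W\to M$ with $\crit j=\kappa$ and ${}^{\lambda}M\subseteq M$, put $\delta=\sup j''\lambda<j(\lambda)$, and let $t$ be a node on level $\delta$ of $j(\dot T)$. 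For each $\alpha<\lambda$ the predecessor of $t$ on level $j(\alpha)$ is forced by some condition, so working in $M$ I choose $p_\alpha\in j(\mathbb{P})$ and $\zeta_\alpha$ with $p_\alpha\Vdash\langle j(\alpha),\zeta_\alpha\rangle\leq_{j(\dot T)}t$; by ${}^{\lambda}M\subseteq M$ the sequence $\langle p_\alpha,\zeta_\alpha\mid\alpha<\lambda\rangle$ lies in $M$.

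Now I would uniformize. Because $\lambda$ is regular and $j(\mu)=\sup_n j(\kappa^{+n})$ (as $\cf\mu=\omega<\kappa$), the pigeonhole principle yields a cofinal $I\subseteq\lambda$, a single length $m=\len(p_\alpha)$, and an $n<\omega$ with $\zeta_\alpha<j(\kappa^{+n})$ for all $\alpha\in I$. Reflecting through $j$ by elementarity, for all $\alpha,\beta\in I$ there are $\gamma,\gamma'<\kappa^{+n}$ and a length-$m$ condition $p\in\mathbb{P}$ forcing $\langle\alpha,\gamma\rangle\leq_{\dot T}\langle\beta,\gamma'\rangle$. This is the data of a system $\mathcal{S}$ in $W$ on $I\times\kappa^{+n}$ whose relations I index by the finite stems $s=\langle d_0,a_0,c_0,\dots,a_{m-1},c_{m-1}\rangle$ of the witnessing conditions, declaring $\langle\alpha,\gamma\rangle\mathrel{R_s}\langle\beta,\gamma'\rangle$ when some condition with stem $s$ forces the corresponding inequality. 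Since there are fewer than $\mu$ stems of the fixed length $m$ and $\kappa^{+n}<\mu$, the system $\mathcal{S}$ is narrow, so Lemma~\ref{lem: nsp with collapses} provides a cofinal branch: a stem $s$ and a condition $p$ forcing some $\{\langle\alpha,\gamma_\alpha\rangle\mid\alpha\in I\}$ to be a cofinal chain of $\dot T$, contradicting that $\dot T$ is Aronszajn.

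The point I expect to be the main obstacle is verifying that each $R_s$ is genuinely a tree-like order respecting the lexicographic order, i.e.\ that the comparabilities forced by distinct stem-$s$ conditions cohere into a single relation as the definition of a system demands. This is exactly where the Prikry property is used a second time: conditions sharing the stem $s$ form a $\leq^\ast$-directed family, so any two forced comparabilities can be amalgamated by a common direct extension, with transitivity inherited from the tree order of $\dot T$. Bounding the number of length-$m$ stems by $\mu$ — and hence the width of $\mathcal{S}$ below $\lambda$ — also relies on the Prikry property, since it lets us disregard the measure-one tails $\langle A_i,C_i\mid i\geq m\rangle$, which are far too numerous to index directly. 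Once coherence and narrowness are secured, the remainder is the bookkeeping already carried out in Theorem~\ref{thm: choice of rho}.
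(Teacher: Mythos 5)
There is a genuine gap, and it sits exactly where these arguments are hardest. Your final step asserts that the narrow system property hands you ``a stem $s$ and a condition $p$ forcing $\{\langle\alpha,\gamma_\alpha\rangle\mid\alpha\in I\}$ to be a cofinal chain of $\dot T$.'' It does not. A cofinal branch of the system indexed by stems gives only: for each pair $\alpha<\beta$ in some cofinal $I'$ there is \emph{some} condition $r_{\alpha,\beta}$ with stem $s_\star$ forcing $\langle\alpha,\zeta_\star\rangle\leq_{\dot T}\langle\beta,\zeta_\star\rangle$. Any two such conditions are compatible (same-stem amalgamation, as you note), but there are $\lambda$ many pairs and no single condition lies below all of them --- the measure-one sets and the collapse coordinates admit lower bounds only for small families. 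A branch of the system in the ground model is not yet a branch of $\dot T$ in $V^{\mathbb{P}}$, and with $\dot T$ merely forced to be Aronszajn there is no contradiction at this stage. The paper spends the entire second half of its proof bridging exactly this: it builds by induction on the coordinates $m<\omega$ a coherent family of conditions $\langle p_\alpha\mid\alpha\in I'\setminus\rho\rangle$ such that $p_\alpha\wedge p_\beta\Vdash\langle\alpha,\zeta_\star\rangle\leq_{\dot T}\langle\beta,\zeta_\star\rangle$, applying a fresh narrow-system argument at each coordinate to stabilize the possible extensions, and then uses the chain condition of $\mathbb{P}$ to find one condition forcing unboundedly many $p_\alpha$ into the generic filter, so that $\{\langle\alpha,\zeta_\star\rangle\mid p_\alpha\in G\}$ is a genuine cofinal branch of $\dot T$ in the extension. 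Your proposal omits this step entirely; the ``main obstacle'' you flag (tree-likeness of each $R_s$) is by comparison routine and is indeed handled by same-stem amalgamation.

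There is a second, smaller gap in the uniformization inside $M$. You pigeonhole only on the \emph{length} $m$ of the conditions $p_\alpha$ and on the block $n$ with $\zeta_\alpha<j(\kappa^{+n})$. But to reflect ``for all $\alpha,\beta\in I$ some single condition forces $\langle\alpha,\gamma\rangle\leq_{\dot T}\langle\beta,\gamma'\rangle$'' you must, in $M$, amalgamate $p_\alpha$ and $p_\beta$ below $t$ and invoke tree-likeness of $j(\dot T)$ --- and conditions of the same length with \emph{different} stems (different $a_i$'s) are incompatible, so this amalgamation can fail. Nor can you simply pigeonhole on the stems themselves: stems in $j(\mathbb{P})$ of length $m$ number far more than $\lambda$. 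This is precisely why the paper first applies the Prikry property to a direct extension $q$ of $j(p)$ (below the master condition $p^\smallfrown\langle j''\kappa^{+n}\rangle$), so that membership of $\alpha$ in the branch up to level $j(\kappa^{+n_\star})$ is decided by the lower parts alone, and only then fixes a single lower part $a_\star$; with the stem fixed, the witnessing conditions are pairwise compatible and the reflection to the narrow system in $W$ (where stems of length $n_\star$ do number fewer than $\mu$) goes through. So your instinct that the Prikry property must enter twice is right, but it is needed for this decision-by-lower-parts step, not for counting stems, and the count alone does not repair the compatibility problem.
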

\begin{proof}
We will give a sketch of the proof. We will show that the claim holds for $s = 0$. The argument for general $s$ is the same, by notation-wise more complicated.

Let $p\in\mathbb{P}$ be a condition and let $\dot{T}$ be a name for a $\lambda$-Aronszajn tree. Let $n$ be the length of $p$. Let $j\colon V\to M$ be a $\lambda$-supercompact embedding, with critical point $\kappa$ which is compatible with $\mathcal{U}_n$ (namely $\mathcal{U}_n$ is the $P_\kappa \kappa^{+n}$ measure which is derived from $j$).

In $M$, let us look at the forcing $j(\mathbb{P})$ below a condition $q\leq j(p)$ of length $n + 1$ such that $a_{n}^q = j^{\prime\prime}\kappa^{+n}$. In other words, $q$ is an extension of $j(p)$ that forces that the $n+1$-th element of the diagonal Prikry sequence is $j^{\prime\prime}\kappa^{+n}$. The forcing $j(\mathbb{P}) / q$ preserves $\lambda$ as a regular cardinal and realizes $j(\dot{T})$ to be a $j(\lambda)$-Aronszajn tree.

Let us denote $\delta = \sup j ``\lambda < j(\lambda)$ and let us look at the name of a partial branch $\{\langle j(\alpha), \zeta_\alpha\rangle \mid M^{j(\mathbb{P})} \models \langle j(\alpha), \zeta_\alpha\rangle \leq_{j(\dot{T})} \langle \delta, 0\rangle\}$.

Using the Prikry property, we may find a direct extension of $q$, $q^\star$, such that for every $\alpha < \lambda$ the value of $k < \omega$ such that $\zeta_\alpha < j(\kappa^{+k})$ is determined by $q^\star$ up to forcing with the first $n$ lower parts of $j(\mathbb{P})$ ($n < \omega$). Since there are less than $\lambda$ many possible values for the first $n$ coordinates of the conditions below $q^\star$, there is a cofinal subset of $\lambda$, $I$, a natural number $n_\star < \omega$ large enough and a fixed lower part $a_\star$ of length $n_\star \geq n + 1$ such that
\[I = \{\alpha < \lambda\mid \exists r \leq q^\star, \stem(r) = a_\star, r\Vdash \exists \zeta < j(\kappa^{+n_\star}), \langle j(\alpha) ,\zeta\rangle \leq \langle\delta, 0\rangle\}.\]

In particular, for every $\alpha, \beta\in I$, $M$ thinks that there is an extension of $j(p)$, $q^{\star\star}$ of length $n+1$ and ordinals $\zeta, \zeta^\prime < j(\kappa^{+n_\star})$ such that $q^{\star\star}\Vdash \langle j(\alpha), \zeta\rangle \leq_{j(\dot{T})} \langle j(\beta), \zeta^\prime\rangle$. Reflecting this to $V$ we conclude that for every $\alpha, \beta\in I$ there is a condition $q'\leq p$ with stem of length $n+1$ and $\zeta, \zeta^\prime < \kappa^{+n_\star}$ such that $q'\Vdash \langle \alpha, \zeta\rangle \leq_{\dot{T}} \langle \beta, \zeta^\prime\rangle$.

This defines a narrow system on $I\times \kappa^{+n_\star}$, indexed by the stems of length $n+1$ which are stems of some condition which is stronger than $p$. By the narrow system property, there is a cofinal branch. So there is $I^\prime \subseteq I$, a stem $s_\star$ and an ordinal $\zeta_\star < \kappa^{+n_\star}$ such that for every $\alpha < \beta$ in $I^\prime$ there is a condition $q$ with stem $s_\star$ forcing $\langle \alpha, \zeta_\star\rangle \leq_{\dot{T}} \langle \beta, \zeta_\star\rangle$.

Next we will build inductively a sequence of conditions $\langle p_\alpha \mid \alpha \in I^\prime \setminus \rho\rangle$ (for some $\rho < \lambda$), such that for every $\alpha < \beta$, \[p_\alpha \wedge p_\beta \Vdash \langle \alpha, \zeta_\star\rangle \leq_{\dot{T}} \langle \beta, \zeta_\star\rangle\]
The construction is done by induction on $m < \omega$, where at each step we define $p_\alpha \restriction m$ in a way that for all $\alpha, \beta$ (except a bounded segment) there is a condition $q$ with $q\restriction m = p_\alpha \restriction m \wedge p_\beta \restriction m$ such that
\[q \Vdash \langle \alpha, \zeta_\star\rangle \leq_{\dot{T}} \langle \beta, \zeta_\star\rangle.\]
Extending $p_\alpha\restriction m$ to $p_\alpha\restriction (m+1)$ is done by defining a narrow system corresponding to the possible extension and using the branch in order to define the relevant value for all $\alpha\in I^\prime$ above the first level that the branch meets.

Eventually, we obtain a sequence of conditions $\{p_\alpha \mid \alpha \in I^\prime\setminus \rho\}$, for some $\rho < \lambda$, $p_\alpha \leq p$. Using the chain condition of the forcing $\mathbb{P}$ we conclude that there is an extension of $p$ that forces that for unbounded many ordinals $\alpha < \lambda$, $p_\alpha$ will be in the generic filter. But then $\{\langle \alpha, \zeta_\star\rangle \mid p_\alpha \in G\}$ is a cofinal branch in $\dot{T}$ (where $G$ is the generic filter for $\mathbb{P}$).
\end{proof}

In order to show the indestructibility, we need to show that there is a simple connection between the different shifts of the forcing:
\begin{lemma}\label{lemma: isomorphism with shift and collapse}
Let $p\in \mathbb{P}$, $\len(p) = n + 1$, $n\geq 1$ and let $f\in \Col(\omega, \rho_{n}^{+\omega})$. There is a condition $q \in \mathbb{P}_n$, of length one such that $\rho^q_0 = \rho^p_n$, such that
$\mathbb{P}_n/q\cong\left(\mathbb{P}/p\right) \times \left(\Col(\omega, \rho_{n}^{+\omega}) / f\right)$.
\end{lemma}
\begin{proof}
Let $\eta = \left(\rho^p_n\right)^{+\omega}$. 

The forcing $\mathbb{P}/p$ is the product $\mathbb{C} /p^{< n} \times \mathbb{P}^{\geq n} / p^{\geq n}$ where
\[\mathbb{C} = \Col(\omega, \left(\rho^p_0\right)^{+\omega}) \times \prod_{i < n} \Col(\left(\rho^p_i\right)^{+\omega + 2}, <\rho^p_{i+1})\] 
and $\mathbb{P}^{\geq n}$ is the set of the $n$-upper part of the conditions of $\mathbb{P}$. 
More precisely, a condition $s\in \mathbb{P}^{\geq n}$ is an $\omega$-sequence of the form 
\[s=\langle a^s_{n}, c^s_{n}, \dots a^s_{l-1}, c^s_{l-1}, A^s_l, C^s_l, \dots \rangle,\] where $l\geq n$ and $a^s_i, c^s_i, A^s_i, C^s_i$ are as in the definition of $\mathbb{P}$ (in particular, $a^s_i\in P_\kappa \kappa^{+i}$). 

The conditions $p^{\geq n}\in \mathbb{P}^{\geq n}, p^{< n}\in\mathbb{C}$ are defined as follows: 
\[p^{< n} = \langle d_0^p, c_0^p, \dots, c_{n-1}^p\rangle,\]
\[p^{\geq n} = \langle a^p_n, c^p_n, A^p_{n+1}, C^p_{n+1}, \dots, A^p_{l}, C^p_{l}, \dots\rangle.\]

Clearly, $|\mathbb{C}| \leq \eta$ and thus $\left(\mathbb{C} / p^{<n}\right) \times \left(\Col(\omega, \eta) / f\right) \cong \Col(\omega, \eta)$. 
Let us fix an isomorphism $\pi_0 \colon \Col(\omega, \eta) \to \left(\mathbb{C} / p^{<n}\right) \times \left(\Col(\omega, \eta) / f\right)$. Note that $\pi_0(\emptyset) = (p^{< n}, f)$.

Let $q\in \mathbb{P}_n$ be the condition $\langle \emptyset\rangle ^\smallfrown p^{\geq n}$. 

By the definition of $\mathbb{P}_n$ and $\mathbb{P}^{\geq n}$,
\[\mathbb{P}_n / q \cong \Col(\omega,\eta) \times \left(\mathbb{P}^{\geq n} / p^{\geq n}\right).\]
Combining this with the isomorphism $\pi_0$, we obtain the isomorphism: 
\[\mathbb{P}_n / q \cong \left(\mathbb{P} / p \right)\times \left(\Col(\omega,\eta)/f\right).\]
\end{proof}

\begin{theorem}
$\mathbb{P}$ forces the tree property at $\aleph_{\omega^2+1}$ to be indestructible by any forcing of size $<\aleph_{\omega^2}$.
\end{theorem}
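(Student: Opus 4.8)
The plan is to reduce, via the reduction lemma proved at the start of this subsection (the one asserting that indestructibility under all small forcings follows from indestructibility under cofinally many Levy collapses), the full statement to a statement purely about collapses, and then to read that statement off from the shifted forcings $\mathbb{P}_s$ through the projection analysis. Concretely, I would apply that lemma with its cardinal taken to be $\kappa$, so that its ``$\lambda^+$'' is our $\lambda = \aleph_{\omega^2+1}$. By its ``moreover'' clause it then suffices to show that, in $V^{\mathbb{P}}$, for every $\mu < \kappa$ there is some $\mu' \in [\mu,\kappa)$ such that $\Col(\omega,\mu')$ forces the tree property at $\lambda$. I will in fact produce such $\mu'$ cofinally in $\kappa$, taking them to be the values $\eta < \kappa$ furnished by the projection lemma above, i.e.\ those of the form dictated by a Prikry point $\rho_n^p = a_n \cap \kappa$; since these $\rho_n^p$ range cofinally in $\kappa$, so do the corresponding $\eta$.

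So fix such an $\eta$ and suppose toward a contradiction that some $p \in \mathbb{P}$, which I may take of length $n+1$ with $\rho_n^p$ matching $\eta$ as in the projection lemma, forces that $\Col(\omega,\eta)$ adds a $\lambda$-Aronszajn tree $\dot T$. I would apply the projection lemma to obtain a length-one condition $q$ with $\rho_0^q = \rho_n^p$ and a projection $\pi\colon \mathbb{P}_n/q \to \mathbb{P}\times\Col(\omega,\eta)$. Let $G_n$ be generic for $\mathbb{P}_n$ below $q$ over $V$, and let $G\times g$ be the $\mathbb{P}\times\Col(\omega,\eta)$-generic with $p \in G\times g$ induced through $\pi$; then $T = \dot T[G\times g]$ is a genuine $\lambda$-Aronszajn tree in $V[G\times g]$. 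By Lemma~\ref{lem: quotient between collapses and shift}, the quotient forcing taking $V[G\times g]$ up to $V[G_n] = V^{\mathbb{P}_n}$ below $q$ has a separative quotient of cardinality $\eta$, and $\eta < \kappa < \lambda$.

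Now I would invoke the branch-preservation fact already used in the reduction lemma: since $\lambda$ is regular, a forcing of cardinality $<\lambda$ adds no cofinal branch to a $\lambda$-Aronszajn tree. Because $|\mathbb{Q}| < \lambda$ also makes the quotient $\lambda$-c.c., it preserves the regularity of $\lambda$ and leaves the levels of $T$ (and $T$ itself) unchanged, so $T$ remains a $\lambda$-Aronszajn tree in $V[G_n]$. But the shifted version of Theorem~\ref{thm: tree property at aleph omega^2 + 1} says that $\mathbb{P}_n$ forces the tree property at $\lambda$, a contradiction. This shows that $\Col(\omega,\eta)$ forces the tree property at $\lambda$ over $V^{\mathbb{P}}$ for cofinally many $\eta < \kappa$, and the reduction lemma then upgrades this to indestructibility of the tree property at $\lambda$ under every forcing of size $<\aleph_{\omega^2}$.

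The step I expect to be the main obstacle is the bookkeeping that lets a single pair $(p,q)$ together with $\pi$ yield a statement about the ambient extension $V^{\mathbb{P}}$ rather than merely about the auxiliary extension $V[G_n]$: one must check that the generic $G\times g$ read off from $G_n$ through $\pi$ is genuinely $\mathbb{P}\times\Col(\omega,\eta)$-generic with $p$ in it, that the collapse target $\Col(\omega,\eta)$ computed over $V^{\mathbb{P}}$ is exactly the coordinate produced by the projection, and that the values $\eta$ so obtained are really cofinal in $\kappa$. The remaining verifications—that passing through the small quotient preserves the regularity of $\lambda$ and the $<\lambda$-size of the levels, so that ``$\lambda$-Aronszajn'' is literally preserved—are routine but should be recorded.
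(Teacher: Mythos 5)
Your proposal is correct and takes essentially the same approach as the paper: reduce via the first lemma of this subsection to the collapses $\Col(\omega,\eta)$ for $\eta = \rho_n^{+\omega} = \aleph_{\omega\cdot n}$ (cofinal in $\aleph_{\omega^2}$), realize $\mathbb{P}\times\Col(\omega,\eta)$ as a projection of the shifted forcing $\mathbb{P}_n$, and use Lemma~\ref{lem: quotient between collapses and shift} to see that the size-$\eta$ quotient can neither add a branch to nor destroy a $\lambda$-Aronszajn tree, contradicting the tree property forced by $\mathbb{P}_n$. The only blemish is the stray ``$\mathbb{Q}$'' where you mean the quotient $\mathbb{R}$, and your flagged verifications (genericity of $G\times g$ with $p\in G$, cofinality of the $\eta$'s) are exactly the routine points the paper leaves implicit.
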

\begin{proof}
Is it enough to show that it is the case for $\Col(\omega, \aleph_{\omega\cdot n})$. Recall that $\aleph_{\omega\cdot n} = \rho_n^{+\omega}$ so we are in the situation of Lemma~\ref{lemma: isomorphism with shift and collapse}. This means that after forcing with $\Col(\omega, \aleph_{\omega\cdot n})$ the tree property holds, as the iteration is isomorphic to the forcing notion $\mathbb{P}_n$ below some condition. 
\end{proof}

\subsection{Indestructible Tree property for \texorpdfstring{$\aleph_{\omega + 1}$ under small \texorpdfstring{$\sigma$}{sigma}-closed forcings}{aleph omega + 1}}\label{subsec: indestructible omega closed}
Let us construct a model very similar to \autoref{subsec: indestructible omega^2}, in which we have the tree property at $\aleph_{\omega + 1}$ and it will be indestructible under any $\sigma$-closed forcing of cardinality $<\aleph_\omega$. The additional restriction on the forcing notions (namely that the forcing is $\sigma$-closed), implies that those forcing notions cannot collapse $\omega_1$.
\begin{theorem}
It is consistent, relative to the existence of $\omega$ many supercompact cardinals, that the tree property holds at $\aleph_{\omega + 1}$ and it is indestructible under any $\sigma$-closed forcing of cardinality $<\aleph_{\omega}$.
\end{theorem}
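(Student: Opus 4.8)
The plan is to follow the blueprint of Subsection~\ref{subsec: indestructible omega^2} almost verbatim, with two systematic changes. First, the bottom Levy collapse in the diagonal forcing is taken to $\omega_1$ rather than to $\omega$; this is what keeps $\omega_1$ alive and turns the poset that we must absorb into the $\sigma$-closed collapse $\Col(\omega_1,\mu)$. Second, the collapses interleaved between successive Prikry points are arranged so that each block contributes exactly one cardinal, which pushes the successor of the singular down from $\aleph_{\omega^2+1}$ to $\aleph_{\omega+1}$. As in the previous subsection, everything is reduced to indestructibility under the canonical family $\{\Col(\omega_1,\mu)\mid\mu<\aleph_\omega\}$.

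First I would prove the $\sigma$-closed analogue of the reduction lemma that opens Subsection~\ref{subsec: indestructible omega^2}: if the tree property holds at $\aleph_{\omega+1}$ and is indestructible under $\Col(\omega_1,\mu)$ for cofinally many $\mu<\aleph_\omega$, then it is indestructible under every $\sigma$-closed forcing $\mathbb{P}$ of cardinality $<\aleph_\omega$. The argument is the same two lines with $\omega$ replaced by $\omega_1$. Given $|\mathbb{P}|=\mu$, pick $\mu\le\mu'<\aleph_\omega$ with $(\mu')^{\omega}=\mu'$ (possible since $2^\omega<\aleph_\omega$, so a tail of successor cardinals below $\aleph_\omega$ is closed under $\omega$-sequences) at which the tree property is indestructible under $\Col(\omega_1,\mu')$. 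By the universality of the Levy collapse, $\Col(\omega_1,\mu')$ absorbs every $\sigma$-closed forcing of size $\le\mu'$, so it adds a $\mathbb{P}$-generic $G$ and the quotient $\Col(\omega_1,\mu')/G$ is $\sigma$-closed of cardinality $\le\mu'<\aleph_\omega$. Since $\aleph_{\omega+1}$ is regular, a forcing of size $<\aleph_\omega$ adds no cofinal branch to an $\aleph_{\omega+1}$-Aronszajn tree; as the tree property holds after $\Col(\omega_1,\mu')$ by hypothesis, it persists in $V[G]$.

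Next I would build the model. Starting from $\omega$ Laver-indestructible supercompacts $\kappa=\kappa_0<\kappa_1<\cdots$ I force with the full-support product $\bM=\prod_n\Col(\kappa_n,<\kappa_{n+1})$ and then with a Sinapova-style diagonal forcing $\bP$ whose conditions have the form $\tup{d_0,a_0,c_0,\dots,a_{n-1},c_{n-1},A_n,C_n,\dots}$, but with $d_0$ now a condition in a collapse to $\omega_1$ and with the interleaved collapses chosen so that the interval between consecutive Prikry points contributes a single cardinal, forcing $\kappa=\aleph_\omega$ and $\lambda=\aleph_{\omega+1}$. The narrow system property at $\lambda$ in $V^{\bM}$ is proved exactly as in Lemma~\ref{lem: nsp with collapses}, and the tree property at $\lambda$ is obtained by repeating the embedding argument of Theorem~\ref{thm: tree property at aleph omega^2 + 1}: fix a $\lambda$-supercompact $j$ compatible with $\mathcal{U}_n$, pass below $p^\smallfrown\tup{j''\kappa^{+n}}$, read off from the putative branch above $\tup{\delta,0}$ a narrow system on $I\times\kappa^{+n_\star}$ indexed by stems, and use the narrow system property to produce a cofinal branch, contradicting that $\dot T$ was Aronszajn.

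Finally I would introduce the shifted forcings $\bP_s$ as before and prove the projection lemma in this setting: for $p$ of length $n+1$ there is a length-one $q\in\bP_n$ together with a projection $\pi\colon\bP_n/q\to\bP\times\Col(\omega_1,\eta)$, where $\eta<\aleph_\omega$ is the cardinal determined by the $n$-th stem, and, as in Lemma~\ref{lem: quotient between collapses and shift}, the separative quotient of $\{r\in\bP_n/q\mid\pi(r)\in G\}$ has cardinality $\eta<\aleph_\omega$. Since each $\bP_s$ forces the tree property and the quotient is small, $\bP\times\Col(\omega_1,\eta)$ forces the tree property; that is, the tree property is indestructible under $\Col(\omega_1,\eta)$ for cofinally many $\eta<\aleph_\omega$, and the reduction lemma above then yields indestructibility under all $\sigma$-closed forcings of size $<\aleph_\omega$. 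I expect the main obstacle to be carrying the two modifications through the tree-property proof simultaneously: I must check that the single-cardinal blocks still admit the diagonal supercompact measures and retain the Prikry property, that the $\omega_1$-collapse at the bottom both preserves $\omega_1$ and keeps $2^\omega$ below $\aleph_\omega$ (so that the universality of $\Col(\omega_1,\mu)$ is available for cofinally many $\mu$), and that the extended embedding $\tilde{j}$ can still be built after the $\sigma$-closed bottom collapse, using the Laver indestructibility of the $\kappa_n$ under $\kappa_n$-directed closed forcing.
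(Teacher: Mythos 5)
Your closing reduction (absorb any $\sigma$-closed $\mathbb{P}$ of size $<\aleph_\omega$ into a Levy collapse to $\omega_1$ and note that the quotient is too small to add a branch) is sound and is essentially how the paper finishes as well. The gap is in the model construction, at exactly the point you flag but do not check: a Sinapova-style diagonal forcing with \emph{single-cardinal blocks} is not available by these methods. The interleaved collapses are constrained by the guiding generics: for the $n$-th block one must build, in $W$, an $N_n$-generic filter $H_n$ for the relevant collapse computed in $N_n \cong \Ult(W,\mathcal{U}_n)$, where $N_n$ is closed under $\kappa^{+n}$-sequences and the poset has $\kappa^{+n+1}$ many dense subsets to meet. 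If each block is to contribute exactly one cardinal, the collapse must begin at roughly $(a\cap\kappa)^{++}$, and then the poset is only $\kappa^{++}$-closed in $W$ --- not enough to meet $\kappa^{+n+1}$ dense sets once $n\geq 2$. This is precisely why in Section~\ref{subsec: indestructible omega^2} the collapses start uniformly at $(a\cap\kappa)^{+\omega+2}$ (giving closure $\kappa^{+n+1}$ in $W$, matching the $\kappa^{+n+1}$ dense sets), at the cost of each block contributing $\omega$ many cardinals and hence $\kappa=\aleph_{\omega^2}$. The known repair for $\aleph_{\omega+1}$ (Sinapova's separate construction) uses blocks of \emph{growing finite} collapse degree, but that destroys the uniformity your shift argument requires: in $\bP_n$ the $i$-th block would carry the collapse degree of block $i+n$, so re-gluing the upper part of a condition of $\bP_n$ does not yield a condition of $\bP$, and the projection $\pi\colon\bP_n/q\to\bP\times\Col(\omega_1,\eta)$ is simply not defined. (A further unaddressed point, even granting the construction: the collapse appearing in the projection is $\Col(\omega_1,\eta)$ computed in $W$, whereas your reduction lemma needs indestructibility under $\Col(\omega_1,\mu')$ of the final model $V^{\bP}$; these are equivalent only if $\bP$ adds no new $\omega$-sequences of ordinals below $\eta$, which requires an argument and has no analogue in the $\Col(\omega,\eta)$ case, where universality needs no closure.)

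The paper proves the theorem by a route that avoids Prikry forcing altogether. It works in $W=V^{\bM}$, where the narrow system property holds at $\lambda=\kappa^{+\omega+1}$, and shows by the reflection argument of Theorem~\ref{thm: choice of rho} that for each $n<\omega$ there is a set $A_n\in\mathcal{U}_0$ such that for every $\rho\in A_n$ the forcing $\Col(\omega,\rho^{+\omega})\times\Col(\rho^{+\omega+1},\kappa^{+n})$ forces the tree property at $\lambda$; by countable completeness of $\mathcal{U}_0$ a single $\rho$ lies in all the $A_n$ simultaneously. The final model is the extension of $W$ by $\Col(\omega,\rho^{+\omega})\times\Col(\rho^{+\omega+1},\kappa)$, and a $\sigma$-closed $\mathbb{Q}$ there of size $\leq\kappa^{+n}$ is absorbed, together with $\Col(\rho^{+\omega+1},\kappa)$, into $\Col(\rho^{+\omega+1},\kappa^{+n})$, with a small quotient. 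Thus one reflection point $\rho$, good for every $n$ at once, does the work your family of shifted forcings was meant to do. That the shift technique of Section~\ref{subsec: indestructible omega^2} does not transfer to $\aleph_{\omega+1}$ is presumably why the paper leaves the analogous indestructibility under arbitrary small forcings at $\aleph_{\omega+1}$ as its first open question; your plan, if it could be carried out, would be attacking that harder problem rather than this theorem.
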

\begin{proof}
We will start with a model of the narrow system property at $\kappa^{+\omega+1}$ for $\kappa$ a supercompact cardinal. This can be obtained, for example, by forcing with the product of the Levy collapses between the supercompact cardinals as in Lemma~\ref{lem: nsp with collapses}. Let $\mathcal{U}_0$ be a normal ultrafilter on $\kappa$ generated from a $\kappa^{+\omega + 1}$-supercompact elementary embedding, $j\colon V\to M$.

Let us show that for every $n < \omega$, there is a large set $A_n\in \mathcal{U}_0$ such that for every $\rho \in A_n$, forcing with $\mathbb{L}_\rho = \Col(\omega, \rho^{+\omega})\times \Col(\rho^{+\omega + 1}, \kappa^{+n})$ forces the tree property at $\kappa^{+\omega + 1}$.

Assume that this is not the case and let $\dot{T}_\rho$ be a counter example for every bad choice of $\rho$, for a fixed $n < \omega$. Since the set of bad choices is in $\mathcal{U}_0$, $\kappa$ is a bad choice of ordinal in $M$. Let us force with $j(\mathbb{L})_\kappa$, and let $M[H]$ be the generic extension. Let $T = j(\dot{T})_\kappa^H$ be an Aronszajn tree at $j(\kappa^{+\omega + 1})$. Let $\delta = \sup j`` \kappa^{+\omega + 1}$ and for every $\alpha < \kappa^{+\omega + 1}$ let $\beta_\alpha < j(\kappa^{+\omega})$ be the element in the level $j(\alpha)$ below $\langle \delta, 0\rangle$.

Using the same arguments as in the proof of Theorem~\ref{thm: choice of rho}, there is a cofinal set $I \subseteq \kappa^{+\omega + 1}$, a decreasing sequence of conditions $q_\alpha\in \Col(\kappa^{+\omega + 1}, j(\kappa)^{+n})$, a condition $p\in \Col(\omega, \kappa^{+\omega})$ and a natural number $N < \omega$ such that for every $\alpha \in I$ there is $\beta < j(\kappa^{+N})$ such that $(p,q_\alpha)\Vdash \langle j(\alpha), \beta\rangle \leq_{T} \langle \delta, 0\rangle$.

Reflecting this back to $V$, we conclude that for every $\alpha, \alpha^\prime \in I$: $$\exists \beta, \beta^\prime < \kappa^{+N},\ \rho < \kappa,\, p\in \mathbb{L}_\rho\text{ such that }p\Vdash_{\mathbb{L}_\rho} \langle \alpha, \beta\rangle \leq_{T_\rho} \langle \alpha^\prime, \beta^\prime\rangle.$$

This gives us a narrow system, similar to the one in the proof of Theorem~\ref{thm: choice of rho}. A branch through this system provides us an ordinal $\rho$ which was a bad choice, a condition $r\in\mathbb{L}_\rho$, a cofinal set $J\subseteq I$ and for all $\alpha\in J$ an ordinal $\beta_\alpha < \kappa^{+N}$ such that for all $\alpha, \alpha^\prime \in J$, $$r\Vdash_{\mathbb{L}_\rho} \langle \alpha, \beta_\alpha\rangle, \langle \alpha^\prime, \beta_{\alpha^\prime}\rangle \text{ are compatible.}$$
This is a contradiction to the fact that this $\dot{T_\rho}$ was a name for an $\lambda$-Aronszajn tree.

Let $A = \bigcap_{n < \omega} A_n$ and let $\rho\in A$. Forcing with $\Col(\omega, \rho^{+\omega})\times \Col(\rho^{+\omega + 1}, \kappa)$ forces the tree property. For every small $\sigma$-closed forcing notion $\mathbb{Q}$ there is $n$ such that $\Col(\rho^{+\omega + 1}, \kappa) \ast \mathbb{Q}$ is a regular subforcing of $\Col(\rho^{+\omega + 1}, \kappa^{+n})$ and since the tree property holds after this forcing and since the quotient is small and thus cannot add branches to Aronszajn trees - we are done.
\end{proof}
\section{Open questions}
In Section~\ref{subsec: indestructible omega^2} we proved that the tree property at $\aleph_{\omega^2+1}$ can be made indestructible under any small forcing poset.
\begin{question}
Is it consistent that the tree property at $\aleph_{\omega+1}$ is indestructible under any forcing of cardinality $<\aleph_{\omega}$?
\end{question}
On the other hand, one can ask whether it is possible to extend the results of Theorem~\ref{thm: destructible tree property}.
\begin{question}\label{question: destructible with preserving cardinals}
Is it consistent that the tree property holds at $\aleph_{\omega+1}$ but there is a small forcing (of cardinality $<\aleph_{\omega}$), that does not collapse cardinals and adds an $\aleph_{\omega+1}$-Aronszajn tree?
\end{question}
Note that in all the currently known models for the tree property at $\aleph_{\omega+1}$, adding a single Cohen real does not add an Aronszajn tree at $\aleph_{\omega+1}$. So we ask the following stronger version of Question~\ref{question: destructible with preserving cardinals}:
\begin{question}
Is it consistent that the tree property holds at $\aleph_{\omega+1}$ but adding a Cohen real adds an $\aleph_{\omega+1}$-Aronszajn tree?
\end{question}
This question is particularly interesting when we assume that $\aleph_{\omega}$ is strong limit since then adding a Cohen real cannot add a weak square for $\aleph_{\omega}$, assuming that there is no weak square in the ground model.

\section{Acknowledgments}
We would like to thank the anonymous referee for improving the readability and accuracy of this paper.

\providecommand{\bysame}{\leavevmode\hbox to3em{\hrulefill}\thinspace}
\providecommand{\MR}{\relax\ifhmode\unskip\space\fi MR }
% \MRhref is called by the amsart/book/proc definition of \MR.
\providecommand{\MRhref}[2]{%
  \href{http://www.ams.org/mathscinet-getitem?mr=#1}{#2}
}
\providecommand{\href}[2]{#2}

\end{document}